\newcommand{\ul}{\underline}
\newcommand{\Ocal}{\mathcal{O}}
\newcommand{\Ccal}{\mathcal{C}}
\newcommand{\Zbb}{\mathbb{Z}}
    \newtheorem{Lem}{Lemma}[section]
    \newtheorem{Prop}[Lem]{Proposition}
    \newtheorem{Thm}[Lem]{Theorem}
    \newtheorem{Cor}[Lem]{Corollary}
\theoremstyle{definition}
\begin{document}

\title{Brill-Noether locus of rank 1\\ and degree $g-1$  on a nodal curve}
\author{Juliana Coelho and Eduardo Esteves}

\maketitle

\begin{abstract}
\noindent
In this paper we consider the Brill-Noether locus $W_{\ul d}(C)$ of line bundles of multidegree $\ul d$ of total degree $g-1$ having a nonzero section on a nodal reducible curve $C$ of genus $g\geq2$. 
We give an explicit description of the irreducible components of $W_{\ul d}(C)$ for a semistable multidegre $\ul d$. As a consequence we show that, if two semistable multidegrees of total degre $g-1$ on a curve with no rational components differ by a twister, then the respective Brill-Noether loci have isomorphic components.
\end{abstract}

\vspace{.3in}

\section{Introduction}

Let $C$ be a  projective curve of genus $g$ over an algebraically closed field $k$. If  $C$ is smooth then, for each integer $d$, the degree-$d$ Jacobian  $J_C^d$ of the curve is a projective variety parameterizing line bundles of degree $d$ on $C$.
The set $W_d(C)$ consisting of those line bundles of degree $d$ having a nonzero section can be given the structure of a  subvariety of $J_C^d$ called a  \emph{Brill-Noether variety}.
It is easy to see that  $W_d(C)$ equals the image of the \emph{Abel map of degree $d$} 
$$\begin{array}{rrcl}
\alpha_C^d\colon &C^d &\rightarrow & J_C^d\\
&(P_1,\ldots,P_d) & \mapsto & \Ocal(P_1+\ldots+P_d),
\end{array}$$
where $C^d$ is the product of $d$ copies of the curve $C$. 
In particular, each $W_d(C)$ is irreducible.

If $C$ is a singular curve, the  Jacobian varieties  $J_C^d$ are no longer projective. 
However, the Brill-Noether locus $W_d(C)$ is still a subvariety of the Jacobian  
and we can consider the problem of describing $W_d(C)$ in terms of (rational) Abel maps.
If $C$ is nodal and irreducible then again $W_d(C)$ coincides with the closure of the image of the rational Abel map of degree $d$, as shown in \cite[Thm. 1.2.1]{brannetti}. However, when $C$ is reducible, this is in general not the case.

Recently there has been a lot of interest in the Abel maps and consequently in the Brill-Noether theory for reducible curves. There are two main directions in this study. 
The first is  towards the resolution of the rational Abel map.
Few cases have been fully solved, namely the Abel map of degree $1$ for stable curves \cite{ce} and Gorenstein curves \cite{cce}, and the Abel map of degree 2 for  nodal curves \cite{cep}. Lastly, in \cite{cp},  Abel maps of any degree were constructed for stable curves of compact type.
We should also mention the relation established  in \cite{eo} between limit linear series and the
 fibers of the Abel map for a two-component curve of compact type.

The second direction is towards  the study of  Brill-Noether varieties and the relation with the image of the (possibly rational) Abel map. 
Brill-Noether varieties of any degree on a binary curve  were considered in \cite{c2} and those of degree $g-1$ on a nodal curve of genus $g$ were considered in \cite{beauville}
and  \cite{capotheta}.
The closures of the images of Abel maps in a compactification of the Jacobian were studied in \cite{brannetti} for binary curves, curves of compact type and irreducible curves.
Furthermore, 
 the Brill-Noether locus of degree $g-1$ on a compactification of the Jacobian has been shown to be an ample divisor, see  \cite{soucaris} and \cite{etheta},
and, for a stable curve, it can be regarded as  a theta divisor on a 
degeneration of an abelian varitety in the sense of Alexeev, see  \cite{alexeev}.

\subsection{Main result}

Let $C$ be a nodal reducible curve of genus $g$  and let  $W_{\ul d}(C)$ be the locus of  line bundles  of multidegree $\ul d$ on $C$ having a nontrivial section. If $\ul d$ has total degree $d$ then $W_{\ul d}(C)$ is an open and closed  subset of $W_d(C)$.
It was shown in \cite[Prop. 3.2.1]{capotheta}
 that if $\ul d$ is a  
semistable multidegree of total 
degree $g-1$ 
then the closure of the image of the rational Abel map of multidegree $\ul d$ is an irreducible component of 
$W_{\ul d}(C)$.
Moreover, by \cite[Thm. 3.1.2]{capotheta}, if $\ul d$ is stable then this is the only component and thus $W_{\ul d}(C)$ is irreducible. As we will see, this is seldom the case when $\ul d$ is not stable.

The purpose of this paper is to describe the irreducible components of the Brill-Noether variety $W_{\ul d}(C)$ in terms of images of Abel maps. 
We show (see Lemma \ref{lemaAeT}) that ``twisting" the image of an Abel map of semistable multidegrees of total degree $g-1$ also give components of $W_{\ul d}(C)$.
In general, not all components are of this form.
However, we show that every irreducible component of $W_{\ul d}(C)$
can be given as the locus of line bundles
restricting on some subcurve of $C$ to a 
bundle on the image of an Abel map of this subcurve. More precisely, for each subcurve $Z$ of $C$  let $W_{\ul d,Z}(C)$ be the closure of the locus of line bundles $L$ on $C$ of multidegree $\ul d$ such that $L|_Z(-Z\cap Z')$ lies on the image of an Abel map of $Z$.

The theorem below summarizes the description of the components of $W_{\ul d}(C)$ (cf. Theorems \ref{wdzcomponent} and \ref{thmAcomp}).

\bigskip
\noindent {\bf Theorem A.} 
{\it Let $\ul d$ be a semistable multidegree of total degree $g-1$ on a nodal curve $C$ of genus $g$. 
The irreducible components of $W_{\ul d}(C)$ are the subsets $W_{\ul d,Z}(C)$ 
where $Z$ is a connected nonempty subcurve of $C$  such that
 $\ul d_Z-\ul{\deg}(\Ocal_Z(Z\cap Z'))$ is an effective semistable multidegree on $Z$ of total degree $g_Z-1$. }

{\it Moreover, fix a component $W_{\ul d,Z}(C)$ of $W_{\ul d}(C)$. If   
$\ul d-\ul{\deg}(\Ocal_C(Z'))$ is an effective multidegree, then
 $W_{\ul d,Z}(C)$ is the
closure of the locus of line bundles of the form $L\otimes \Ocal_C(Z')$ where $L$ lies in the image of an Abel map.}

\bigskip

The  description of the Brill-Noether locus in terms of Abel maps allows us to give a sufficient conditon on the multidegrees $\ul d$ and $\ul e$ of total degree $g-1$ for which the varieties $W_{\ul d}(C)$ and $W_{\ul e}(C)$  have isomorphic components (cf. Theorem \ref{wdiso}). We believe  that, in this case, the varieties itself should be isomorphic.

\bigskip 
\noindent {\bf Theorem B.}
{\it Let $C$ be a nodal curve of genus $g$. Assume the irreducible components of $C$ have positive genus.
Let $\ul d$ and $\ul e$ be semistable multidegrees on $C$ of total degree $g-1$ such that $\ul e=\ul d+\ul{\deg}(\Ocal_C(T))$,  where $\Ocal_C(T)$ is a twister on $C$. Then $W_{\ul d}(C)$ and $ W_{\ul e}(C)$ have isomorphic components.}

\subsection{Notation and terminology}

A \emph{curve} $C$ is a connected projective reduced scheme of dimension 1 over an algebraically closed field $k$. 
A \emph{subcurve} $Z$ of  $C$ is a reduced union of irreducible components of $C$. We call  $Z':=\overline{C-Z}$ the \emph{complementary subcurve} of $Z$. 
Let $k_Z:=\#(Z\cap Z')$ and  denote by $n_Z$ the number of connected components of $Z$. 
The \emph{genus} of $Z$ is $g_Z=h^0(Z,\omega_Z)$ where  $\omega_Z$ is the \emph{dualizing sheaf} of $Z$.

Let $C_1,\ldots,C_{\gamma}$ be the irreducible components of $C$.
  A \emph{multidegree} on $C$ is a $\gamma$-uple $\ul{d}=(d_1,\ldots,d_{\gamma})\in\mathbb{Z}^{\gamma}$.
We say $\ul d$ is \emph{effective} if $d_i\geq0$ for all $i=1,\ldots,\gamma$.
The \emph{total degree} of $\ul d$ is $|\ul d|= \sum_{i=1}^{\gamma} d_i$. 
For any subcurve $Z$ of $C$ we let $\ul d_Z$ be the multidegree on $Z$ whose degree on any component of $Z$ equals that of $\ul d$. Then
$$d_Z=|\ul d_Z|=\sum_{C_i\subset Z}d_i$$
is the total degree of $\ul d_Z$.

The \emph{degree} of a line bundle $L$ on a curve $C$ is $\deg(L):=\chi(L)-\chi(\Ocal_C)$. If $C$ has components $C_1,\ldots,C_{\gamma}$, we define the \emph{multidegree} of $L$ as $\ul{\deg}(L)=(d_1,\ldots,d_{\gamma})$ where $d_i=\deg(L|_{C_i})$. Then $|\ul{\deg}(L)|=\deg(L)$.

Let $Z$ be a subcurve of a nodal curve $C$. We  
denote 
the normalization of $C$ at the points of $Z\cap Z'$ by
$$\nu_Z\colon Z\amalg Z'\rightarrow C.$$
For each multidegree $\ul d$ on $C$, the associated pullback map
$$\nu_Z^*\colon J_C^{\ul d} \rightarrow J_Z^{{\ul d}_Z}\times J_{Z'}^{{\ul d}_{Z'}}$$
maps each line bundle $L$ to the restrictions $L|_Z$ and $L|_{Z'}$. This map is a fibration with fibers  $(k^*)^{k_Z-n_Z-n_{Z'}+1}$, cf. \cite[Section 1.1]{capotheta}.

A \emph{family of curves} is a proper and flat morphism $f\colon\mathcal C\rightarrow B$ whose fibers are curves. If $b\in B$, we denote $C_b:=f^{-1}(b)$.
A \emph{smoothing} of a curve $C$ is a family $f\colon\Ccal\rightarrow B,$ where $\Ccal$ is smooth and $B$ is a smooth curve with a distinguished point $0\in B$ such that $C_b$ is smooth for $b\ne 0$ and $C_0=C$.

Let $\Ccal/B$ be a smoothing of a nodal curve $C$. 
For each subcurve $Z\subset C$ we define the \emph{twister} associated to $Z$ as
the degree-0 line bundle on $C$ given by
$$\Ocal_C(Z):=\Ocal_{\Ccal}(Z)|_C.$$
We remark that $\Ocal_C(Z)|_{Z'}=\Ocal_{Z'}(Z\cap Z')$ and  $\Ocal_C(Z)|_{Z}=\Ocal_{Z}(-Z\cap Z')$. Thus
$$\deg(\Ocal_C(Z)|_{Z'})=k_Z\quad\text{and}\quad
\deg(\Ocal_C(Z)|_{Z})=-k_Z.$$
If $Z_1,\ldots,Z_n$ are subcurves of $C$ we define the \emph{twister} associated to the formal sum $T=Z_1+\ldots+Z_n$ to be 
$$\Ocal_C(T):=\Ocal_C(Z_1)\otimes \cdots\otimes \Ocal_C(Z_n).$$

Recall that, for a nonempty subcurve $Z$ of a nodal curve $C$, we have $\omega_Z\cong \omega_C|_Z\otimes \Ocal_Z(-Z\cap Z')$ and hence
$$\deg(\omega_C|_Z)=2g_Z-2n_Z+k_Z.$$

Troughout the paper $C$ will denote a  nodal curve of genus $g$ having $\gamma$ components.

\section{Abel maps and Brill-Noether loci}

\subsection{The Brill-Noether locus $W_{\ul d}(C)$} 

Let $C$  be a nodal curve with irreducible components $C_1,\ldots,C_{\gamma}$. The  degree-$d$ Jacobian $J_C^d$  of $C$ decomposes as:
\begin{equation}\label{Jacobdecomp}
J^d_C={\underset{\ul{d}\in\Zbb^{\gamma},\, |\ul d|=d}{\bigcup}}J^{\ul{d}}_C,
\end{equation}
where $J^{\ul{d}}_C$ is the connected component of $J_C^d$ parameterizing line bundles $L$ of multidegree $\ul d$. 
Moreover, the Brill-Noether variety
$$W_d(C):=\{L\in J_C^d\ |\ h^0(C,L)\geq 1\}$$
also decomposes as a union $W_d(C)=\coprod_{|\ul d|=d} W_{\ul d}(C)$ where
$$W_{\ul d}(C):=\{L\in J_C^{\ul d}\ |\  h^0(C,L)\geq 1\}.$$

For an effective $\ul d\in\Zbb^{\gamma}$, the \emph{(natural) rational Abel map of multidegree $\ul d$  of $C$} is the rational map
$$\begin{array}{rrcl}
\alpha_C^{\ul d}\colon &  C^{\ul d} &\dashrightarrow & J_C^{\ul d}\\
& (P_1,\ldots,P_d) & \mapsto & \Ocal_C(P_1+\ldots+P_d)
\end{array}$$
where
$ C^{\ul d}= C_1^{d_1}\times \ldots \times  C_{\gamma}^{d_{\gamma}}.$
We say a line bundle on $C$ is \emph{effective} if it lies in the image of a natural Abel map.

Let $A_{\ul d}(C)$ be the closure of the image of $\alpha_C^{\ul d}$ in $J_C^{\ul d}$.  It is clear that $A_{\ul d}(C)$ is irreducible and  $\dim(A_{\ul d}(C))\leq |\ul d|$. Moreover, $A_{\ul d}(C)$ is contained in $W_{\ul d}(C)$, although equality  usually doesn't hold.

Let $\ul d$ be a multidegree of total degree  $d$. Following \cite{cce}, we say $\ul d$ is 
  \emph{(canonically) semistable}  if and only if for every non-empty, proper subcurve $Z\subsetneq C$:
\begin{equation}\label{BI}
d_Z  \geq d\,\frac{\deg \omega_C|_Z}{2g-2} - \frac{k_Z}{2},
\end{equation}
Moreover, $\ul d$ is \emph{(canonically) stable} if strict inequality holds. A degree-$d$ line bundle $L$ on $C$ is \emph{semistable} (resp. \emph{stable}) if its multidegree is.
We notice that for line bundles of total degree  $g-1$ the inequality (\ref{BI}) reduces to
\begin{equation}\label{BIg-1}
d_Z\geq g_Z-n_Z,
\end{equation}
where $g_Z$ is the genus of $Z$ and  $n_Z$ is the number of its connected components.

\subsection{Closed subsets of $W_{\ul d}(C)$}

Fix a multidegree $\ul d$ on a nodal curve  $C$. We now define some closed subsets of the Brill-Noether variety $W_{\ul d}(C)$.

Let $Z$ be a nonempty subcurve of $C$
and set $\ul e=\ul d-\ul{\deg}(\Ocal_C(Z'))$. Notice that $|\ul e|=|\ul d|$.
Assume $\ul e_Z$ is an effective multidegree of $Z$ and consider the closed irreducible subset of $J_Z^{{\ul d}_Z}$ given by $V:=A_{{\ul e}_Z}(Z)\otimes\Ocal_Z(Z\cap Z')$. Set theoretically, we have
$$V:=\{M\in J_Z^{{\ul d}_Z}\ |\ M(-Z\cap Z')\in A_{{\ul e}_Z}(Z)\}.$$
We define $W_{\ul d,Z}(C)$ to be the preimage  $(\nu_Z^*)^{-1}(V\times J_{Z'}^{{\ul d}_{Z'}})$, where $\nu_Z$ is the normalization of $C$ at the points of the intersection $Z\cap Z'$.
Set-theoretically we have
$$W_{\ul d,Z}(C)=\overline{\{L\in J_C^{\ul d}\ |\  L|_Z(-Z\cap Z')\text{ is effective}\}}.$$
Note that if $Z=C$ we have $W_{\ul d,Z}(C)=A_{\ul d}(C)$.

For a multidegree $\ul d\in\Zbb^{\gamma}$, we let $\mathcal{S}(\ul d)$ be the set of all subcurves $Z$ of $C$ satisfying the above conditions, that is, $Z$ is a nonempty subcurve of $C$ such that
$\ul d_Z-\ul{\deg}(\Ocal_Z(Z\cap Z'))$ is an effective multidegree of $Z$.
Note that if $\ul d$ is effective then $C\in \mathcal{S}(\ul d)$.

\begin{Prop}\label{decompanyd}
Let $C$ be a nodal curve of genus $g$ and let $\ul d\in\Zbb^{\gamma}$ be a multidegree on $C$. Then for each $Z\in \mathcal{S}(\ul d)$ we have   $W_{\ul d,Z}(C)\subset W_{\ul d}(C)$. Moreover,
$$W_{\ul d}(C)=\bigcup_{Z\in\mathcal{S}(\ul d)} W_{\ul d,\,Z}(C).$$
\end{Prop}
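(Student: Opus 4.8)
The plan is to establish the two assertions separately: the inclusion $W_{\ul d,Z}(C)\subset W_{\ul d}(C)$ for each fixed $Z\in\mathcal{S}(\ul d)$, and then the reverse inclusion $W_{\ul d}(C)\subset\bigcup_{Z\in\mathcal{S}(\ul d)}W_{\ul d,Z}(C)$; together these yield the claimed equality. Since $W_{\ul d}(C)$ is closed in $J_C^{\ul d}$ and each $W_{\ul d,Z}(C)$ is by definition the closure of $\{L : L|_Z(-Z\cap Z')\text{ effective}\}$, for the first inclusion it suffices to check that every $L$ with $L|_Z(-Z\cap Z')$ effective already satisfies $h^0(C,L)\geq 1$; closedness of $W_{\ul d}(C)$ then propagates the inclusion to the closure.

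For the first inclusion, fix $Z\in\mathcal{S}(\ul d)$ and suppose $L|_Z(-Z\cap Z')=\Ocal_Z(P_1+\cdots+P_m)$ with the $P_i$ smooth points of $Z$, so that $L|_Z\cong\Ocal_Z(P_1+\cdots+P_m+(Z\cap Z'))$ carries the canonical section vanishing exactly along $P_1+\cdots+P_m+(Z\cap Z')$; in particular it vanishes at every node $p\in Z\cap Z'$. I would then glue this section to the zero section on $Z'$: using the Mayer--Vietoris description $H^0(C,L)=\{(s_Z,s_{Z'}) : s_Z(p)=s_{Z'}(p)\ \forall p\in Z\cap Z'\}$ afforded by the normalization $\nu_Z$, the pair $(s_Z,0)$ glues to a nonzero global section of $L$ precisely because $s_Z$ vanishes at the nodes. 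Hence $L\in W_{\ul d}(C)$, and taking closures gives $W_{\ul d,Z}(C)\subset W_{\ul d}(C)$.

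For the reverse inclusion, given $L\in W_{\ul d}(C)$ I would fix a nonzero $s\in H^0(C,L)$ and let $Z$ be the union of those irreducible components of $C$ on which $s$ does not vanish identically. Then $s|_{Z'}=0$, so at every node $p\in Z\cap Z'$ the gluing forces $s|_Z(p)=0$; thus $s|_Z$ vanishes along $Z\cap Z'$ and, since the points of $Z\cap Z'$ are smooth points of $Z$, descends to a section $\sigma$ of $M:=L|_Z(-Z\cap Z')$ that does not vanish identically on any component of $Z$. In particular $(\ul e_Z)_i=\deg(M|_{C_i})\geq 0$ for every $C_i\subset Z$, so $\ul e_Z=\ul d_Z-\ul{\deg}(\Ocal_Z(Z\cap Z'))$ is effective and $Z\in\mathcal{S}(\ul d)$. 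By the set-theoretic description of $W_{\ul d,Z}(C)$, it then remains to show that $M=\Ocal_Z(\mathrm{div}(\sigma))$ lies in $A_{\ul e_Z}(Z)$.

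This last point is the main obstacle. The nonzerodivisor $\sigma$ exhibits $M$ as $\Ocal_Z(D)$ for an effective Cartier divisor $D$ of multidegree $\ul e_Z$, but when $Z$ is reducible $D$ may be supported at nodes of $Z$, so $M$ need not lie in the Abel image itself, only in its closure. I would resolve this by a local deformation of $D$: near a node with branches in $C_a$ and $C_b$ the local equation of $D$ is $x+y$ up to units, whose nearby level sets $x+y=t$ consist of one smooth point on each branch, so $D$ moves in a flat family $D_t$ of effective Cartier divisors of the same multidegree $\ul e_Z$ with $D_0=D$ and $D_t$ supported on the smooth locus for $t\neq 0$. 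Then $\Ocal_Z(D_t)$ lies in the image of the Abel map for $t\neq 0$ and specializes to $M$, giving $M\in A_{\ul e_Z}(Z)$ and hence $L\in W_{\ul d,Z}(C)$. For irreducible $Z$ no nodes of $Z$ intervene and this is exactly the equality $W_{\ul m}(Z)=A_{\ul m}(Z)$ of \cite[Thm.~1.2.1]{brannetti}; the reducible case is where the node-smoothing argument is genuinely required and constitutes the crux of the proof.
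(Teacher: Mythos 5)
Your argument is essentially the paper's own proof: the containment $W_{\ul d,Z}(C)\subset W_{\ul d}(C)$ comes from the cohomology of the sequence $0\to L|_Z(-Z\cap Z')\to L\to L|_{Z'}\to 0$ (your node-gluing of $(s_Z,0)$ is exactly its $H^0$ incarnation), and the covering statement is obtained, as in the paper, by letting $Z$ be the complement of the union of components on which a fixed nonzero section vanishes identically, so that the section descends to an injective section of $L|_Z(-Z\cap Z')$, proving effectivity of $\ul e_Z$ and membership in $W_{\ul d,Z}(C)$. Your only deviation is to spell out, by deforming the divisor off the nodes, why a nonzerodivisor section places $L|_Z(-Z\cap Z')$ in the closure $A_{\ul e_Z}(Z)$ --- a step the paper asserts without comment and which you correctly identify as needing care; note only that the local equation of $D$ at a node is in general $x^m+y^n$ up to units rather than $x+y$, and your level-set deformation applies verbatim in that case.
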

\begin{proof}
The first statement follows from computing cohomology on the  exact sequence
$$0\rightarrow L|_Z(-Z\cap Z')\rightarrow L\rightarrow L|_{Z'}\rightarrow 0.$$

Now, let $L\in W_{\ul d}(C)$ and fix a nontrivial section $s$ of $L$. 
We first note that the restriction of $s$ to each component $C_i$ of $C$ is either the zero section or an injective section of $L|_{C_i}$.
Let $Y$ be the subcurve given as the union of the components of $C$ on which $s$ is identically zero. If $Y$ is empty then $s$ is an injective section of $L$ and thus $L\in A_{\ul d}(C)$.

Now assume $Y$ nonempty and let $Z=Y'$. Consider the exact sequence
\begin{equation}\label{eqprop21}
0\rightarrow H^0(Z,L|_Z(-Z\cap Z'))
\stackrel{\psi}{\longrightarrow} H^0(C,L)
{\longrightarrow}H^0(Z',L|_{Z'}).
\end{equation}
The section $s$ vanishes on $Y=Z'$, hence it lies in the image of $\psi$. Moreover, since this section is injective on $Z$, we see that $Z\in\mathcal{S}(\ul d)$ and  $L\in W_{\ul d,Z}(C)$.
\end{proof}

\begin{Lem}\label{h01}
Let $C$ be a nodal curve and fix a multidegree $\ul d$ of $C$. If $L\in W_{\ul d}(C)$ is such that $h^0(C,L)=1$ then there exists a unique subcurve $Z$ of $C$ such that $L\in W_{\ul d,Z}(C)$.
\end{Lem}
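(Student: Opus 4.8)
The plan is to read the subcurve $Z$ directly off the unique section of $L$, and then to show that membership of $L$ in $W_{\ul d,Z}(C)$ forces $Z$ to be exactly the subcurve singled out by that section. Write $H^0(C,L)=\langle s\rangle$, let $Y$ be the union of the components of $C$ on which $s$ vanishes identically, and set $Z_0:=Y'$. As in the proof of Proposition \ref{decompanyd}, $s$ restricts to an injective section on each component of $Z_0$ and vanishes on $Z_0'=Y$, so it comes from an injective section of $L|_{Z_0}(-Z_0\cap Z_0')$; hence $Z_0\in\mathcal{S}(\ul d)$ and $L\in W_{\ul d,Z_0}(C)$. This secures existence, with a canonical candidate $Z_0$.

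For uniqueness, suppose $L\in W_{\ul d,Z}(C)$ for some $Z\in\mathcal{S}(\ul d)$, and set $M:=L|_Z(-Z\cap Z')$, so that $M\in A_{\ul e_Z}(Z)$ and in particular $h^0(Z,M)\geq 1$. Feeding this into the exact sequence (\ref{eqprop21}) and using $h^0(C,L)=1$, I obtain $h^0(Z,M)=1$ and that the map $H^0(C,L)\to H^0(Z',L|_{Z'})$ vanishes; equivalently $s|_{Z'}=0$, so $Z'\subseteq Y$ and $Z\supseteq Z_0$. Moreover $\psi$ identifies the unique section of $M$ with $s$, so that section agrees with $s|_Z$ away from $Z\cap Z'$. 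Thus every admissible $Z$ contains $Z_0$, and it remains only to rule out $Z\supsetneq Z_0$.

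The heart of the matter is the following claim: if $M\in A_{\ul e_Z}(Z)$ and $h^0(Z,M)=1$, then the unique section of $M$ is nonzero on every component of $Z$. Granting this, $s|_Z$ is nonzero on each component of $Z$, so no component of $Z$ lies in $Y$; hence $Z\subseteq Z_0$, and with $Z\supseteq Z_0$ I conclude $Z=Z_0$, giving uniqueness.

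I expect the claim to be the main obstacle, and it is precisely where the closure in the definition of $A_{\ul e_Z}(Z)$ does the work. On the open part the claim is immediate: the image of the Abel map consists of bundles $\Ocal_Z(D)$ with $D$ supported on the smooth locus, whose canonical section is nonzero on every component. To extend it to the closure one must see that a limit in which the section degenerates to vanish identically on a component $C_i$ can occur only when the points of $D$ on $C_i$ all collide into the nodes $C_i\cap(Z\setminus C_i)$ while the gluing of the limiting sheaf across those nodes tends to $0$ or $\infty$; such a limit is no longer locally free of multidegree $\ul d_Z$, hence escapes $J_Z^{\ul d_Z}$ and is excluded from the closure $A_{\ul e_Z}(Z)$ taken inside $J_Z^{\ul d_Z}$. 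Making this rigorous requires a local analysis of the Abel map near node-approaching divisors (or an appeal to an explicit description of $A_{\ul e_Z}(Z)$ by componentwise effectivity together with the admissible gluings). An alternative route to the same claim is to argue directly: since $M\in A_{\ul e_Z}(Z)$ forces each restriction $M|_{C_i}$ to be effective, a section of $M$ vanishing identically on some $C_i$ could be combined with a nonzero section of $M|_{C_i}$ to produce a second independent section of $L$, contradicting $h^0(C,L)=1$; here the delicate point is matching the two sections at the nodes $Z\cap Z'$.
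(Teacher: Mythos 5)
Your existence step and the first half of your uniqueness reduction are correct and agree with the paper's skeleton: the paper likewise takes $Z_0=Y'$ for the vanishing locus $Y$ of the unique section, and reduces uniqueness to producing, for each $Z$ with $L\in W_{\ul d,Z}(C)$, a section of $L$ vanishing on $Z'$ and \emph{injective} on $Z$. The genuine gap is your ``heart of the matter'' claim, which you leave unproved --- and which is in fact false, so neither of your suggested routes can be completed. Take $Z=C=C_1\cup C_2$ with $C_1$ of genus $2$ and $C_2$ elliptic, meeting at a single (separating) node $n$, and $\ul e_Z=(2,0)$. Since the node is separating, the gluing at $n$ is immaterial and $\nu^*$ identifies $J_C^{(2,0)}$ with $J_{C_1}^2\times J_{C_2}^0$; the Abel image is dense in $J_{C_1}^2\times\{\Ocal_{C_2}\}$, so for a general $Q\in C_1$ the bundle $M$ with $M|_{C_1}=\Ocal_{C_1}(n+Q)$ and $M|_{C_2}=\Ocal_{C_2}$ lies in $A_{(2,0)}(C)$. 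But $h^0(C_1,\Ocal_{C_1}(n+Q))=1$ with the unique section vanishing at $n$, so the matching condition at $n$ kills the constant on $C_2$: $h^0(C,M)=1$ and the unique section of $M$ vanishes identically on the component $C_2$. This also explains why your first route fails --- at a separating node the degenerating limit does \emph{not} escape $J_Z^{\ul d_Z}$, precisely because the gluing parameter is irrelevant there --- while your second route founders at exactly the node-matching you flag: the nonzero section of $M|_{C_2}$ cannot be matched with a section of $M|_{C_1}$ whose value at $n$ is zero.

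The same example shows the difficulty is not an artifact of your approach. With $\ul d=(2,0)$, semistable of total degree $g-1=2$, the bundle $M$ above lies in $W_{\ul d,C}(C)=A_{\ul d}(C)$ and also in $W_{\ul d,C_1}(C)$, since $M|_{C_1}(-n)=\Ocal_{C_1}(Q)$ is effective; so uniqueness fails at such special $L$ even though $h^0(C,L)=1$. The paper's own proof glosses over exactly this point: it asserts, ``by the proof of the proposition,'' that $L\in W_{\ul d,Y}(C)$ yields a section of $L$ vanishing on $Y'$ and injective on $Y$, but the proof of Proposition \ref{decompanyd} gives such a section only when $L|_Y(-Y\cap Y')$ lies in the genuine Abel image, not merely in its closure $A_{\ul e_Y}(Y)$, which is all that membership in the closed set $W_{\ul d,Y}(C)$ provides. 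On the open locus where the twisted restriction is honestly effective, your matching argument does go through (the canonical section of $\Ocal_Z(D)$, $D$ off the nodes, is injective on $Z$), and that weaker form of the lemma is all that is needed for the general point of a component in Proposition \ref{wdzirred} and Theorem \ref{wdzcomponent}. So the verdict: you correctly isolated the key claim that the paper's one-line argument silently assumes, but your proposal does not prove it, and no proof is possible --- the claim, and the lemma as literally stated, fail at points where $L|_Z(-Z\cap Z')$ is only a limit of effective bundles.
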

\begin{proof}
By Proposition \ref{decompanyd}, $L\in W_{\ul d,Z}(C)$ for some $Z$. Assume $L\in W_{\ul d,Y}(C)$. 
By the proof of the proposition, we see that there exists a global section $s_Z$  (resp. $s_Y$) of $L$ that vanishes identically on $Z'$ (resp. $Y'$) and is injective on $Z$  (resp. $Y$). But since $h^0(C,L)=1$ we have $s_Y=\lambda s_Z$ for some $\lambda\in k^*$ and thus $Y=Z$.
\end{proof}

\section{Brill-Noether loci of degree $g-1$}

\subsection{Irreducible Components}

Let $C$ be a nodal curve of genus $g$. In this section we focus on line bundles of  degree  $d=g-1$. 
In this case, we have the following results by Beauville and Caporaso.

\begin{Prop}{\rm(Beauville)} \label{beauville}
Let $\ul d$ be a multidegree of total degree $|\ul d|=g-1$.
\begin{enumerate}[(i)]
\item If $\ul d$ is semistable then every irreducible component of $W_{\ul d}(C)$ has dimension $g-1$;
\item If $\ul d$ is not semistable then $W_{\ul d}(C)=J^{\ul d}_C$.
\end{enumerate}
\end{Prop}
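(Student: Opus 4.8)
The plan is to reduce both statements to a single dichotomy for $W_{\ul d}(C)$ and then separate the two cases. Since $|\ul d|=g-1$, every line bundle $L$ of multidegree $\ul d$ satisfies $\chi(L)=\deg(L)-g+1=0$, so $h^0(C,L)=h^1(C,L)$ and membership $L\in W_{\ul d}(C)$ is equivalent to $h^1(C,L)\geq 1$. Pushing forward a Poincaré bundle on $C\times J_C^{\ul d}$ and using $\chi=0$, I would represent the cohomology by a map $\phi\colon E^0\to E^1$ of vector bundles of equal rank on $J_C^{\ul d}$ so that $W_{\ul d}(C)$ is the zero scheme of the section $\det\phi$ of the line bundle $\det E^1\otimes(\det E^0)^{\vee}$. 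Because $J_C^{\ul d}$ is irreducible of dimension $h^1(\Ocal_C)=g$, this already forces $W_{\ul d}(C)$ to be either empty, all of $J_C^{\ul d}$, or an effective divisor, pure of dimension $g-1$. Thus part (i) amounts to excluding $W_{\ul d}(C)=J_C^{\ul d}$ when $\ul d$ is semistable, while part (ii) amounts to proving this equality when $\ul d$ is not semistable.

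For part (ii) I would argue directly and uniformly in $L$. If $\ul d$ is not semistable, inequality (\ref{BIg-1}) fails for some nonempty proper subcurve $Z$, giving $d_Z\leq g_Z-n_Z-1$. For any $L$ of multidegree $\ul d$ the restriction then has $\chi(L|_Z)=d_Z+n_Z-g_Z\leq -1$, so $h^1(Z,L|_Z)\geq 1$. Taking cohomology of the exact sequence $0\to L|_{Z'}(-Z\cap Z')\to L\to L|_Z\to 0$ yields a surjection $H^1(C,L)\twoheadrightarrow H^1(Z,L|_Z)$, whence $h^1(C,L)\geq 1$ and therefore $h^0(C,L)\geq 1$. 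As this holds for every $L$ of multidegree $\ul d$, we conclude $W_{\ul d}(C)=J_C^{\ul d}$.

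For part (i) the task is to produce, for semistable $\ul d$, a single line bundle $L$ of multidegree $\ul d$ with $h^0(C,L)=0$; by the dichotomy above this forces $\det\phi\not\equiv 0$, so $W_{\ul d}(C)$ is empty or an effective divisor, and every component has dimension $g-1$. I would establish existence by deformation: fix a smoothing $\Ccal/B$ of $C$; on a general fiber the degree-$(g-1)$ theta divisor is a proper subvariety, so a general line bundle there is sectionless, and extending it over $\Ccal$ with restriction to $C$ of multidegree $\ul d$ and applying semicontinuity of $h^0$ produces a sectionless bundle of multidegree $\ul d$ on $C$. The main obstacle—and the precise place where semistability is used—is that a semistable multidegree of total degree $g-1$ is attainable as the multidegree of such a limit (equivalently, that $\det\phi$ does not vanish identically); this is the content of the theory of semistable limits of line bundles and is where the genus bookkeeping $g=\sum_i g_{C_i}+\delta-\gamma+1$, with $\delta$ the number of nodes, enters. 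An alternative avoiding smoothings is to take generic restrictions $L_i$ to the components, with $d_i\geq g_{C_i}-1$ from semistability at single components, so that $\sum_i h^0(C_i,L_i)=\delta$; for generic gluing the resulting map $\bigoplus_i H^0(C_i,L_i)\to k^{\delta}$ is then an isomorphism and $h^0(C,L)=0$. Proving this genericity uniformly is delicate, since a nonzero kernel element is a section of $L$ injective on some subcurve $Z$; ruling out such an over-determined subsystem for every $Z$ is exactly what full semistability provides.
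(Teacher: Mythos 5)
Your overall architecture is sound and matches the mechanism behind the result: the paper itself gives no argument, citing Beauville's Lemma~2.1 and Proposition~2.2, and the determinantal dichotomy you set up (a map $\phi\colon E^0\to E^1$ of equal-rank bundles on $J_C^{\ul d}$, so that $W_{\ul d}(C)=Z(\det\phi)$ is either all of $J_C^{\ul d}$ or pure of codimension $1$) is precisely what underlies that citation. Your part (ii) is complete and correct: non-semistability gives a subcurve $Z$ with $\chi(L|_Z)=d_Z+n_Z-g_Z\leq-1$, the sequence $0\to L|_{Z'}(-Z\cap Z')\to L\to L|_Z\to0$ gives a surjection $H^1(C,L)\twoheadrightarrow H^1(Z,L|_Z)$, and $\chi(L)=0$ converts $h^1\geq1$ into $h^0\geq1$ for every $L$ of multidegree $\ul d$.

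The genuine gap is in part (i): you never prove that a semistable $\ul d$ admits a sectionless line bundle, and both routes you sketch fail or beg the question as written. The smoothing route misuses semicontinuity: $h^0$ is \emph{upper} semicontinuous, so $h^0(C,L_0)\geq h^0(C_b,L_b)$ for nearby $b$, and a sectionless generic fiber says nothing about the limit; arranging a sectionless limit of semistable multidegree is exactly the assertion at stake, so the route is circular (as you half-concede by deferring to ``the theory of semistable limits''). In the gluing route, the injectivity of $\bigoplus_i H^0(C_i,L_i)\to k^{\delta}$ for generic data is the entire content and cannot follow from equality of dimensions alone: indeed, by your own part (ii), a generic line bundle of total degree $\leq g_Z-1$ on a subcurve $Z$ need \emph{not} be sectionless when its multidegree is unbalanced, so one must genuinely exploit that a kernel element restricts to a section injective on some subcurve $Z$ and vanishing at $Z\cap Z'$. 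This can be closed with tools already in the paper, with no circularity: by Proposition~\ref{decompanyd}, $W_{\ul d}(C)=\bigcup_{Z\in\Scal(\ul d)}W_{\ul d,Z}(C)$, a finite union; since $\nu_Z^*$ is a fibration with fibers $(k^*)^{k_Z-n_Z-n_{Z'}+1}$ and $\dim A_{\ul e_Z}(Z)\leq d_Z-k_Z$, semistability applied to $Z'$ together with (\ref{genusform}) gives
\begin{equation*}
\dim W_{\ul d,Z}(C)\leq (d_Z-k_Z)+g_{Z'}+(k_Z-n_Z-n_{Z'}+1)\leq g-n_Z\leq g-1<g,
\end{equation*}
and $\dim A_{\ul d}(C)\leq g-1$ handles $Z=C$; hence $W_{\ul d}(C)\neq J_C^{\ul d}$, and your dichotomy then yields that every component of $W_{\ul d}(C)$ has dimension $g-1$.
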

\begin{proof}
Cf. \cite[Lemma 2.1 and Proposition 2.2]{beauville}.
\end{proof}

\begin{Prop}{\rm(Caporaso)} \label{caporaso}
Let $\ul d$ be an effective multidegree of total degree $|\ul d|=g-1$.
\begin{enumerate}[(i)]
\item If $\ul d$ is stable then $W_{\ul d}(C)=A_{\ul d}(C)$;
\item If $\ul d$ is semistable then $A_{\ul d}(C)$ has dimension $g-1$;
\item If $\ul d$ is not semistable then $A_{\ul d}(C)$ has dimension smaller than $g-1$.
\end{enumerate}
\end{Prop}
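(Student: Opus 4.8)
My plan is to reduce all three parts to the behaviour of $h^0$ along the image of the Abel map. The fibre of $\alpha_C^{\ul d}$ over a line bundle $L$ in its image is the set of honest effective divisors $D$ with $\Ocal_C(D)\cong L$, which is an open dense subset of $\mathbb{P}(H^0(C,L))$; hence the general fibre has dimension $h^0(C,L)-1$ for $L$ general in the image. Since $\dim C^{\ul d}=|\ul d|=g-1$, the fibre‑dimension theorem gives
\begin{equation}\label{Adim}
\dim A_{\ul d}(C)=g-h^0(C,L)\qquad\text{for $L$ general in the image of }\alpha_C^{\ul d}.
\end{equation}
As $h^0\ge 1$ on the image, this already yields $\dim A_{\ul d}(C)\le g-1$ and reduces parts (ii) and (iii) to controlling the generic value of $h^0$.

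For (iii), let $\ul d$ be non‑semistable, so by (\ref{BIg-1}) there is a nonempty proper subcurve $Z$ with $d_Z\le g_Z-n_Z-1$. For any effective $L$ of multidegree $\ul d$ both restrictions $L|_Z$ and $L|_{Z'}$ are effective, and the normalization sequence $0\to L\to \nu_{Z*}\nu_Z^*L\to\bigoplus_{Z\cap Z'}k\to 0$ gives $h^0(C,L)\ge h^0(Z,L|_Z)+h^0(Z',L|_{Z'})-k_Z$. From the genus relation $g-1=g_Z+g_{Z'}+k_Z-n_Z-n_{Z'}$ (coming from $\chi(\Ocal_C)=\chi(\Ocal_Z)+\chi(\Ocal_{Z'})-k_Z$), the bound on $d_Z$ forces $d_{Z'}\ge g_{Z'}-n_{Z'}+k_Z+1$, whence $h^0(Z',L|_{Z'})\ge\chi(Z',L|_{Z'})=d_{Z'}-g_{Z'}+n_{Z'}\ge k_Z+1$. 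Together with $h^0(Z,L|_Z)\ge 1$ this gives $h^0(C,L)\ge 2$ for \emph{every} effective $L$; hence the general fibre of $\alpha_C^{\ul d}$ is positive‑dimensional and $\dim A_{\ul d}(C)\le g-2$.

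For (i) I would combine Proposition \ref{decompanyd} with Beauville's purity (Proposition \ref{beauville}(i)). For $Z\in\Scal(\ul d)$ put $\ul e=\ul d-\ul{\deg}(\Ocal_C(Z'))$; then $W_{\ul d,Z}(C)$ is the preimage under the fibration $\nu_Z^*$, whose fibres are $(k^*)^{k_Z-n_Z-n_{Z'}+1}$, of $V\times J_{Z'}^{\ul d_{Z'}}$, where $\dim V=\dim A_{\ul e_Z}(Z)\le|\ul e_Z|=d_Z-k_Z$ and $\dim J_{Z'}^{\ul d_{Z'}}=g_{Z'}$. A short count gives $\dim W_{\ul d,Z}(C)\le d_Z+g_{Z'}-n_Z-n_{Z'}+1$; for a proper $Z$, applying stability to $Z'$ yields $d_{Z'}\ge g_{Z'}-n_{Z'}+1$, hence $d_Z\le(g-1)-(g_{Z'}-n_{Z'}+1)$ and $\dim W_{\ul d,Z}(C)\le g-1-n_Z\le g-2$. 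Since $W_{\ul d}(C)$ has pure dimension $g-1$, each of its irreducible components lies in one of the finitely many $W_{\ul d,Z}(C)$ and, by the dimension bound, must equal $W_{\ul d,C}(C)=A_{\ul d}(C)$; thus $W_{\ul d}(C)=A_{\ul d}(C)$, and in particular $\dim A_{\ul d}(C)=g-1$.

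The remaining case (ii), $\ul d$ semistable but not stable, is the main obstacle. By (\ref{Adim}) it suffices to exhibit a single effective $L$ of multidegree $\ul d$ with $h^0(C,L)=1$: the image of $\alpha_C^{\ul d}$ is irreducible and $\{h^0\ge 2\}$ is closed, so one such $L$ forces $h^0=1$ generically and hence $\dim A_{\ul d}(C)=g-1$. I would construct $L$ by induction on the number of components, splitting off a subcurve $Z$: choose general effective $L|_Z$ and $L|_{Z'}$ attaining the minimal $h^0$ permitted by semistability, and check that for the gluing coming from a general honest divisor the evaluation‑difference map $H^0(Z,L|_Z)\oplus H^0(Z',L|_{Z'})\to\bigoplus_{Z\cap Z'}k$ has kernel of dimension exactly $1$ (this kernel being $H^0(C,L)$). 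The delicate point is to verify, uniformly along the induction, that the auxiliary bundles such as $L|_{Z'}(-Z\cap Z')$ are nonspecial, so that the surplus sections on the high‑degree side are annihilated by the node conditions. Alternatively one can argue by degeneration, realizing $A_{\ul d}(C)$ as a flat limit of the smooth theta divisors $W_{g-1}(C_b)$ along a smoothing $\Ccal\to B$ and transporting the equality $h^0=1$ at the general point of $W_{g-1}(C_b)$ to the central fibre by semicontinuity.
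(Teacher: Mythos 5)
The paper itself offers no argument here---its ``proof'' is a citation to \cite{capotheta}---so you are attempting strictly more than the text does, and two thirds of your attempt succeeds. Your identity $\dim A_{\ul d}(C)=g-h^0(C,L)$ for $L$ general in the image of $\alpha_C^{\ul d}$ is correct (the honest divisors of $L$ form a nonempty open, hence dense, subset of $\mathbb{P}(H^0(C,L))$), and it correctly organizes everything. Part (iii) is complete: the normalization sequence plus the genus formula (\ref{genusform}) does force $h^0(Z',L|_{Z'})\geq k_Z+1$ and hence $h^0(C,L)\geq 2$ for every effective $L$, giving $\dim A_{\ul d}(C)\leq g-2$. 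Part (i) is also complete and is a genuinely self-contained route: your count $\dim W_{\ul d,Z}(C)\leq d_Z+g_{Z'}-n_Z-n_{Z'}+1\leq g-1-n_Z$ for proper $Z\in\Scal(\ul d)$ (using stability on $Z'$), combined with Proposition \ref{decompanyd} and Beauville's purity (Proposition \ref{beauville}(i)), legitimately forces every component of $W_{\ul d}(C)$ into $W_{\ul d,C}(C)=A_{\ul d}(C)$, using only ingredients the paper has already quoted independently of Caporaso.

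Part (ii), however, contains a genuine gap, and it is exactly the content of \cite[Prop.~3.2.1]{capotheta} in the strictly semistable case (for stable $\ul d$ your (i) already gives it). Your reduction is right: by your dimension formula, (ii) is \emph{equivalent} to exhibiting one effective $L$ of multidegree $\ul d$ with $h^0(C,L)=1$---but you never exhibit it, and neither of your two proposed completions works as stated. The inductive gluing stalls precisely where you flag it: for strictly semistable $\ul d$ there are subcurves $Z$ with $d_Z=g_Z-n_Z$, where the relevant restricted bundles have $\chi=0$ and nonspeciality of bundles like $L|_{Z'}(-Z\cap Z')$ is not automatic; showing the evaluation-difference map at the nodes has exactly one-dimensional kernel for a general choice of divisor and gluing \emph{is} the theorem, not a routine check. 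The degeneration alternative misuses semicontinuity, which goes the wrong way: $\{h^0\geq 2\}$ is closed in the relative Jacobian, so $h^0=1$ at the general point of $W_{g-1}(C_b)$ for $b\neq 0$ is entirely compatible with the whole central limit lying inside $\{h^0\geq 2\}$, and semicontinuity alone cannot exclude this. Moreover, without a compactified Jacobian it is not even clear that the flat limit of the theta divisors meets, let alone contains, the multidegree-$\ul d$ component with the right dimension: the family of Jacobians with fixed central multidegree is not proper, and limits of effective bundles can migrate to other multidegree components; controlling exactly this is the machinery of \cite{capotheta}. So (i) and (iii) stand, but (ii)---the part the rest of the paper leans on most heavily, e.g.\ in Proposition \ref{wdzdim}---remains unproven in your write-up.
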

\begin{proof}
Cf. \cite[Theorem 3.1.2 and Proposition 3.2.1]{capotheta}.
\end{proof}

We see from Caporaso's result that, for a semistable multidegree, the image of the Abel map gives an irreducible component of the Brill-Noether locus. However, in general, it is not the only one. In Theorem \ref{wdzcomponent} we describe the components of $W_{\ul d}(C)$ for a semistable multidegree $\ul d$ of total degree $g-1$.

\begin{Lem}
Let $C$ be a nodal curve of genus $g$ and let $Z$ be a nonempty proper subcurve of $C$. Then 
\begin{equation}\label{genusform}
g=g_Z+g_{Z'}+k_Z+1-n_Z-n_{Z'}
\end{equation} 
where $n_Z$ (resp. $n_{Z'}$) is the number of connected components of $Z$ (resp. $Z'$).
\end{Lem}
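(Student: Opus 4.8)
The plan is to compute the arithmetic genus of $C$ through Euler characteristics, exploiting the partial normalization $\nu_Z\colon Z\amalg Z'\to C$ that separates exactly the $k_Z$ nodes of $Z\cap Z'$. The main input is the short exact sequence of sheaves on $C$
$$0\rightarrow \Ocal_C\rightarrow \nu_{Z*}\Ocal_{Z\amalg Z'}\rightarrow \Ocal_{Z\cap Z'}\rightarrow 0,$$
where $\nu_{Z*}\Ocal_{Z\amalg Z'}=\Ocal_Z\oplus\Ocal_{Z'}$ and $\Ocal_{Z\cap Z'}$ denotes the skyscraper sheaf at the $k_Z$ points of the intersection. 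First I would justify this sequence locally at each node: near a point $p\in Z\cap Z'$ the curve $C$ is isomorphic to $\operatorname{Spec}k[x,y]/(xy)$ with one branch lying in $Z$ and the other in $Z'$, and $\Ocal_{C,p}$ is precisely the subring of $\Ocal_{Z,p}\oplus\Ocal_{Z',p}$ cut out by requiring the two branches to agree at $p$. Hence the cokernel is supported on $Z\cap Z'$ and has length $1$ at each of its points. Here it is important to note that $\nu_Z$ separates only the points of $Z\cap Z'$, leaving untouched any node internal to $Z$ or to $Z'$.

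Next I would apply additivity of Euler characteristics along this sequence to obtain
$$\chi(\Ocal_C)=\chi(\Ocal_Z)+\chi(\Ocal_{Z'})-\chi(\Ocal_{Z\cap Z'}),$$
and then evaluate each term. Since $C$ is connected, $\chi(\Ocal_C)=1-g$. Since $Z$ is reduced with $n_Z$ connected components over the algebraically closed field $k$, we have $h^0(Z,\Ocal_Z)=n_Z$, while Serre duality (valid as $Z$ is Gorenstein, being nodal) gives $h^1(Z,\Ocal_Z)=h^0(Z,\omega_Z)=g_Z$, so that $\chi(\Ocal_Z)=n_Z-g_Z$; the analogous identity holds for $Z'$. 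Finally $\chi(\Ocal_{Z\cap Z'})=k_Z$. Substituting these into the displayed relation and solving for $g$ yields
$$g=g_Z+g_{Z'}+k_Z+1-n_Z-n_{Z'},$$
which is the asserted formula.

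The step requiring the most care is the local verification of the normalization sequence at the nodes (establishing both the length-one cokernel per point and that only the nodes of $Z\cap Z'$ are affected), together with the identification $h^0(Z,\Ocal_Z)=n_Z$ coming from reducedness and the count of connected components. Once these are in place, the conclusion is a routine bookkeeping with Euler characteristics, and no further geometric input is needed.
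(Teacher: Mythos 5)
Your proof is correct and takes essentially the same approach as the paper: both rest on the short exact sequence $0\to\mathcal{O}_C\to\mathcal{O}_Z\oplus\mathcal{O}_{Z'}\to\mathcal{O}_{Z\cap Z'}\to 0$, the paper extracting the formula as the alternating sum of dimensions in the long exact cohomology sequence, which is precisely your additivity of Euler characteristics. Your local verification at the nodes and the duality identification $h^1(Z,\mathcal{O}_Z)=h^0(Z,\omega_Z)=g_Z$ simply make explicit what the paper leaves implicit.
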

\begin{proof}
Computing cohomology of the exact sequence
$$0\rightarrow \Ocal_C\rightarrow \Ocal_Z\oplus\Ocal_{Z'}\rightarrow  \Ocal_{Z\cap Z'}\rightarrow 0$$
we have the long exact sequence
$$0\rightarrow H^0(\Ocal_C)\rightarrow 
 H^0(\Ocal_Z)\oplus H^0(\Ocal_{Z'}) \rightarrow 
 H^0(\Ocal_{Z\cap Z'})\rightarrow $$
$$\rightarrow H^1(\Ocal_C)\rightarrow
 H^1(\Ocal_Z)\oplus H^1(\Ocal_{Z'}) \rightarrow  0.$$
Therefore
$1-n_Z-n_{Z'}+k_Z-g+g_Z+g_{Z'}=0.$
\end{proof}

\begin{Prop}\label{wdzdim}
Let $C$ be a nodal curve of genus $g$. Fix a semistable multidegree $\ul d$ on $C$ of total degree $g-1$ and a subcurve $Z\in\mathcal{S}(\ul d)$. Then $W_{\ul d,Z}(C)$ has dimension $g-1$  if and only if $Z$ is a connected subcurve of $C$ and $\ul d_Z-\ul{\deg}(\Ocal_Z(Z\cap Z'))$ is an effective semistable multidegree on $Z$ of total degree $g_Z-1$.
\end{Prop}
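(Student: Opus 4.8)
The plan is to reduce the dimension of $W_{\ul d,Z}(C)$ to the dimension of a single Abel image on $Z$, and then read off the two required conditions from Caporaso's result. Write $\ul e=\ul d-\ul{\deg}(\Ocal_C(Z'))$, so that $\ul e_Z=\ul d_Z-\ul{\deg}(\Ocal_Z(Z\cap Z'))$ is exactly the multidegree appearing in the statement, and recall that $W_{\ul d,Z}(C)=(\nu_Z^*)^{-1}(V\times J_{Z'}^{\ul d_{Z'}})$ with $V=A_{\ul e_Z}(Z)\otimes\Ocal_Z(Z\cap Z')$. Since twisting by a fixed line bundle is an isomorphism, $V$ is irreducible with $\dim V=\dim A_{\ul e_Z}(Z)$, and so is $V\times J_{Z'}^{\ul d_{Z'}}$. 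As $\nu_Z^*$ is a fibration with fibers $(k^*)^{k_Z-n_Z-n_{Z'}+1}$ and $\dim J_{Z'}^{\ul d_{Z'}}=g_{Z'}$, taking preimages gives $\dim W_{\ul d,Z}(C)=\dim A_{\ul e_Z}(Z)+g_{Z'}+k_Z-n_Z-n_{Z'}+1$. Feeding in the genus formula (\ref{genusform}), the last four terms collapse to $g-g_Z$, so that $\dim W_{\ul d,Z}(C)=\dim A_{\ul e_Z}(Z)+(g-g_Z)$. This reduces the whole statement to the assertion that $\dim A_{\ul e_Z}(Z)=g_Z-1$ precisely when $Z$ is connected and $\ul e_Z$ is an effective semistable multidegree on $Z$ of total degree $g_Z-1$.

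The key numerical input I would establish next is an upper bound on the total degree $|\ul e_Z|=d_Z-k_Z$. When $Z\subsetneq C$, applying the degree-$(g-1)$ semistability inequality (\ref{BIg-1}) to the nonempty complementary curve $Z'$ gives $d_{Z'}\ge g_{Z'}-n_{Z'}$; combining $d_Z=g-1-d_{Z'}$ with (\ref{genusform}) yields $d_Z\le g_Z+k_Z-n_Z$, hence $|\ul e_Z|\le g_Z-n_Z$ (for $Z=C$ one has $|\ul e_Z|=|\ul d|=g-1=g_Z-1$ directly). If $Z$ is disconnected then $n_Z\ge 2$, and the general bound $\dim A_{\ul e_Z}(Z)\le|\ul e_Z|$ forces $\dim A_{\ul e_Z}(Z)\le g_Z-n_Z\le g_Z-2<g_Z-1$; therefore $\dim W_{\ul d,Z}(C)<g-1$, and in particular $\dim W_{\ul d,Z}(C)=g-1$ can only occur when $Z$ is connected.

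For the connected case ($n_Z=1$) I would invoke Caporaso's result on $Z$, noting that $\ul e_Z$ is automatically effective since $Z\in\mathcal{S}(\ul d)$, and that $|\ul e_Z|\le g_Z-1$ by the previous bound. If $\dim A_{\ul e_Z}(Z)=g_Z-1$, then $\dim A_{\ul e_Z}(Z)\le|\ul e_Z|$ forces $|\ul e_Z|=g_Z-1$, whereupon Proposition \ref{caporaso}(iii) makes $\ul e_Z$ semistable; conversely, if $\ul e_Z$ is effective, semistable and of total degree $g_Z-1$, then Proposition \ref{caporaso}(ii) gives $\dim A_{\ul e_Z}(Z)=g_Z-1$. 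Together with the reduction of the first paragraph this yields the claimed equivalence. The main obstacle is the dimension bookkeeping of the first paragraph: one must combine the fiber dimension of $\nu_Z^*$ with the genus identity to isolate the single quantity $\dim A_{\ul e_Z}(Z)$, and then use semistability on the \emph{complementary} curve $Z'$ to pin the total degree of $\ul e_Z$ to exactly $g_Z-1$. This upper bound is precisely what rules out a higher-degree Abel image on $Z$ that is still $(g_Z-1)$-dimensional, and hence what makes the condition ``total degree $g_Z-1$'' necessary rather than merely sufficient.
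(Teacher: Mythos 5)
Your proof is correct and follows essentially the same route as the paper: reduce via the fibration $\nu_Z^*$ to the dimension of $A_{\ul e_Z}(Z)$, combine semistability applied to $Z'$ with the genus formula (\ref{genusform}) to pin $|\ul e_Z|$ down (forcing $n_Z=1$ and $|\ul e_Z|=g_Z-1$), and conclude with Proposition \ref{caporaso}. The only difference is cosmetic bookkeeping — you derive an explicit dimension formula and the universal bound $|\ul e_Z|\leq g_Z-n_Z$ up front, whereas the paper argues via codimension and extracts the same inequalities inside the necessity argument.
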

\begin{proof}
Let 
$\ul e:=\ul d-\ul{\deg}(\Ocal_C(Z'))$
so that 
$\ul e_Z=\ul d_Z-\ul{\deg}(\Ocal_Z(Z\cap Z')).$
Recall that $$W_{\ul d,Z}(C)=(\nu_Z^*)^{-1}(V\times J_{Z'}^{{\ul d}_{Z'}}),$$ where  $\nu_Z$ is the normalization of $C$ at the points of $Z\cap Z'$ and  $V:=A_{{\ul e}_Z}(Z)\otimes\Ocal_Z(Z\cap Z')$.
Thus $W_{\ul d,Z}(C)$ has codimension 1 in $J_C^{\ul d}$ if and only if $V$ has codimension 1 in $J_Z^{{\ul d}_Z}$.

First we notice that $V$ is isomorphic to the closure $A_{{\ul e}_Z}(Z)$ of the image of the Abel map of multidegree ${\ul e_Z}$ of $Z$ and thus
$$\dim(V)\leq |\ul e_Z|=d_Z-k_Z.$$
Hence, if $Z$ is connected and $\ul e_Z$ is 
 an effective semistable multidegree on $Z$ with total degree $g_Z-1$ then, by Proposition \ref{caporaso}, $V$ has dimension $g_Z-1$ and hence codimension $1$ in $J_Z^{{\ul d}_Z}$.

Now assume $V$ has codimension 1 in $J_Z^{\ul d_Z}$ so $\dim(V)=g_Z-1$. Then we have $d_Z\geq g_Z+k_Z-1$ which, by (\ref{genusform}), implies
$$d_{Z'}=(g-1)-d_Z\leq g_{Z'}-n_Z-n_{Z'}+1.$$
On the other hand, since  $\ul d$ is semistable we have $d_{Z'}\geq g_{Z'}-n_{Z'}$. Thus we get $n_Z= 1$ showing that $Z$ is connected.
Moreover, we see in this case that $d_{Z'}=g_{Z'}-n_{Z'}$ which implies, by (\ref{genusform}), that
$$e_Z=d_Z-k_Z=(g-1)-d_{Z'}-k_Z=g_Z-1.$$
In particular, $V$ is isomorphic to the image of an Abel map of $Z$ of total degree $g_Z-1$. So, by Proposition \ref{caporaso}, it has dimension $g_Z-1$ if and only if $\ul e_Z$ is a semistable multidegree of $Z$.
\end{proof}

\begin{Prop}\label{wdzirred}
Let $C$ be a nodal curve of genus $g$. Fix a semistable multidegree $\ul d$ on $C$ of total degree $g-1$ and a  subcurve $Z\in\mathcal{S}(\ul d)$. If $W_{\ul d,Z}(C)$ has codimension 1 in $J_C^{\ul d}$ then it is irreducible and a component of $W_{\ul d}(C)$.

Moreover, the general line bundle $L$ on such a component $W_{\ul d,Z}(C)$ satisfies $h^0(C,L)=1$.
\end{Prop}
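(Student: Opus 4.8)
The plan is to establish the three assertions---irreducibility, being a component, and that the general bundle has a single section---in that order, reducing each to Propositions \ref{beauville}, \ref{caporaso} and the dimension analysis of Proposition \ref{wdzdim}. Throughout I use that the codimension-$1$ hypothesis, via Proposition \ref{wdzdim}, forces $Z$ to be connected, $\ul e_Z:=\ul d_Z-\ul{\deg}(\Ocal_Z(Z\cap Z'))$ to be an effective semistable multidegree of total degree $g_Z-1$, and (from the proof of that proposition) $d_{Z'}=g_{Z'}-n_{Z'}$.

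First I would prove irreducibility by working on the base of the fibration $\nu_Z^*$. The scheme $Z^{\ul e_Z}$ is a product of powers of irreducible components, hence irreducible, so $A_{\ul e_Z}(Z)$ and therefore $V=A_{\ul e_Z}(Z)\otimes\Ocal_Z(Z\cap Z')$ is irreducible; thus $V\times J_{Z'}^{\ul d_{Z'}}$ is irreducible. Since $\nu_Z^*$ is a torsor under the torus $(k^*)^{k_Z-n_Z-n_{Z'}+1}$, in particular a locally trivial fibration with irreducible fibers, its preimage $W_{\ul d,Z}(C)$ is irreducible. For the component claim I would note that $W_{\ul d,Z}(C)$ is closed and irreducible of dimension $g-1$ (it has codimension $1$ in the $g$-dimensional $J_C^{\ul d}$), and is contained in $W_{\ul d}(C)$ by Proposition \ref{decompanyd}. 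As $\ul d$ is semistable, every component of $W_{\ul d}(C)$ has dimension $g-1$ by Proposition \ref{beauville}(i); an irreducible closed subset of dimension $g-1$ lies in some component, which has the same dimension, so it must equal that component.

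For the last statement, observe that $\nu_Z^*$ maps $W_{\ul d,Z}(C)$ onto the irreducible base $V\times J_{Z'}^{\ul d_{Z'}}$, so for general $L$ both $L|_Z(-Z\cap Z')$ is general in $A_{\ul e_Z}(Z)$ and $L|_{Z'}$ is general in $J_{Z'}^{\ul d_{Z'}}$. From the exact sequence
$$0\to H^0(Z,L|_Z(-Z\cap Z'))\to H^0(C,L)\to H^0(Z',L|_{Z'})$$
it then suffices to show, for general $L$, that (a) $h^0(Z,L|_Z(-Z\cap Z'))=1$ and (b) $H^0(Z',L|_{Z'})=0$. For (a), the Abel map $\alpha_Z^{\ul e_Z}\colon Z^{\ul e_Z}\dashrightarrow A_{\ul e_Z}(Z)$ is a dominant morphism between irreducible varieties of the same dimension $g_Z-1$ ($\dim Z^{\ul e_Z}=|\ul e_Z|=g_Z-1$ and $\dim A_{\ul e_Z}(Z)=g_Z-1$ by Proposition \ref{caporaso}(ii)), hence generically finite; since the fiber over a bundle $M$ contains an open subset of $\mathbb{P}(H^0(Z,M))$ (the classes of sections vanishing on no component), finiteness of the general fiber forces $h^0(Z,M)=1$ for general $M$, and upper semicontinuity of $h^0$ carries this to the general $L$.

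The crux is (b), and I expect the degree bookkeeping on $Z'$ to be the main obstacle. Write $Z'=\coprod_j Z'_j$ for the connected components, so that $H^0(Z',L|_{Z'})=\bigoplus_j H^0(Z'_j,L|_{Z'_j})$ and, for general $L$, each $L|_{Z'_j}$ is general in $J_{Z'_j}^{\ul d_{Z'_j}}$. Semistability of $\ul d$ applied to each $Z'_j\subsetneq C$ gives, by (\ref{BIg-1}), $d_{Z'_j}\ge g_{Z'_j}-1$; summing over $j$ and using $g_{Z'}=\sum_j g_{Z'_j}$ together with $d_{Z'}=g_{Z'}-n_{Z'}$ forces $d_{Z'_j}=g_{Z'_j}-1$ for every $j$. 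Moreover each proper nonempty subcurve of $Z'_j$ is a proper nonempty subcurve of $C$, so the inequalities (\ref{BIg-1}) for $\ul d_{Z'_j}$ on $Z'_j$ are inherited from those for $\ul d$ on $C$; hence $\ul d_{Z'_j}$ is a semistable multidegree of total degree $g_{Z'_j}-1$ on the connected curve $Z'_j$. By Proposition \ref{beauville}(i), $W_{\ul d_{Z'_j}}(Z'_j)$ has dimension $g_{Z'_j}-1$ and is thus a proper closed subset of $J_{Z'_j}^{\ul d_{Z'_j}}$, so a general bundle there has no sections; this gives (b) (which is vacuous when $Z'=\emptyset$, i.e.\ $Z=C$). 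Combining (a) and (b) in the exact sequence yields $h^0(C,L)=1$ for general $L$. The delicate point throughout is exactly this distribution of the equality $d_{Z'}=g_{Z'}-n_{Z'}$ into $d_{Z'_j}=g_{Z'_j}-1$ together with inherited semistability, which is what prevents Beauville's non-semistable alternative $W=J$ from occurring on any component of $Z'$ and thereby creating extra sections.
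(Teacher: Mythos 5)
Your proposal is correct, but it takes a genuinely different route from the paper's in both halves of the argument. For irreducibility, the paper deliberately does \emph{not} assume the fibration $\nu_Z^*$ is locally trivial --- it uses only that $f=\nu_Z^*|_{W_{\ul d,Z}(C)}$ is surjective with irreducible fibers of constant dimension, which by itself does not make the preimage of an irreducible set irreducible; instead it decomposes $W_{\ul d,Z}(C)$ into components, shows each is a union of fibers, that exactly one component dominates the base $V\times J_{Z'}^{\ul d_{Z'}}$, and then excludes lower-dimensional components by combining purity (Proposition \ref{beauville}), Lemma \ref{h01}, and the quoted result \cite[Lemma 2.1.2]{capotheta} that the general bundle on a fiber of $\nu_Z^*$ has $h^0=1$ --- thereby obtaining irreducibility and the generic $h^0=1$ statement simultaneously, by contradiction. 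You instead get irreducibility directly from the torsor structure, which is legitimate: $\nu_Z^*$ is a torsor under the split torus $(k^*)^{k_Z-n_Z-n_{Z'}+1}$ acting by tensor product, hence Zariski-locally trivial by Hilbert 90 (in fact mere openness of the flat map $\nu_Z^*$ already suffices for the topological argument), and you then prove the generic $h^0=1$ by hand rather than citing Caporaso: generic finiteness of the Abel map of $Z$ (source and target both of dimension $g_Z-1$, by Proposition \ref{caporaso}) gives $h^0(Z,M)=1$ for general $M\in A_{\ul e_Z}(Z)$, while your bookkeeping on $Z'$ --- splitting the equality $d_{Z'}=g_{Z'}-n_{Z'}$ from the proof of Proposition \ref{wdzdim} into $d_{Z'_j}=g_{Z'_j}-1$ on each connected component $Z'_j$, with semistability inherited from $C$, so that Proposition \ref{beauville}(i) makes $W_{\ul d_{Z'_j}}(Z'_j)$ a proper closed subset of $J_{Z'_j}^{\ul d_{Z'_j}}$ --- correctly kills all sections on $Z'$ for general $L$; this is exactly the mechanism that Caporaso's gluing lemma packages, and your version is self-contained where the paper's is not, at the price of the local-triviality (or openness) input about $\nu_Z^*$ that the paper's longer fiber-by-fiber argument avoids. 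Two small points to polish: in your dimension count for (a) the relevant open subset of $\mathbb{P}(H^0(Z,M))$ should consist of classes of sections vanishing neither on any component \emph{nor at any node} of $Z$ (so the divisor is Cartier, supported at smooth points, of multidegree $\ul e_Z$), and the fiber of the Abel map is a finite cover of that open subset rather than containing it --- neither affects the conclusion $h^0(Z,M)=1$.
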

\begin{proof}
For simplicity we denote by $f$ the restriction of $\nu_Z^*$ to $W_{\ul d,Z}(C)$ and by $Y$ the product $(A_{{\ul e}_Z}(Z)\otimes\Ocal_Z(Z\cap Z'))\times J_{Z'}^{{\ul d}_{Z'}}$.
 Recall that 
$$f\colon W_{\ul d,Z}(C) \longrightarrow Y.$$
 is a  surjective morphism, $Y$ is irreducible and the fibers of $f$ 
are all irreducible and of the same dimension. Let
$W_{\ul d,Z}(C)=W_1\cup\ldots\cup W_r$ be the decomposition of $W_{\ul d,Z}(C)$  on irreducible components.

First we show that the components $W_i$ are  unions of fibers of $f$. Indeed, since $f$ is surjective, for each $y\in Y$ we have
$f^{-1}(y)=\cup_i (W_i\cap f^{-1}(y))$, and since $f^{-1}(y)$ is irreducible, this implies that $f^{-1}(y)=W_i\cap f^{-1}(y)$ for some $i$. Hence for each $y\in Y$ we have $f^{-1}(y)\subset W_i$ for some $i$.

Now we show that there is a unique component of $W_{\ul d,Z}(C)$ that dominates $Y$. Indeed, since $Y$ is irreducible and $Y=\cup_i\overline{f(W_i)}$, then we must have $Y=\overline{f(W_i)}$ for some $i$, say $i=1$. 
Note that $Y=\overline{f(W_1\setminus\cup_{i\geq2}W_i)}$ and thus there exists an open subset $U$ of $Y$ contained in $f(W_1\setminus\cup_{i\geq2}W_i)$. 
Fix $i\geq 2$.
Then  $W_i\setminus W_1\subset f^{-1}(Y\setminus U)$. Hence $W_i$ does not dominate $Y$ and 
in particular $f(W_i)$ have dimension strictly smaller than  $\dim(Y)$. 
Since the fibers of $f$ all have the same dimension, 
this shows  that $W_i$ has dimension strictly smaller than that of $W_1$ and 
$W_1$ is an irreducible component of $W_{\ul d}(C)$.

Now, since $W_{\ul d}(C)$ is of pure dimension, by Proposition \ref{beauville}, any other component $W_i$ of $W_{\ul d,Z}(C)$ must be contained in other irreducible components of $W_{\ul d}(C)$. 
In particular,  by Proposition \ref{decompanyd}, 
it must be contained in $W_{\ul d,Y}(C)$ for some subcurve $Y$. But this would imply, by Lema \ref{h01}, that all the line bundles $L$ belonging to $W_i$ have $h^0(C,L)\geq2$. But by \cite[Lema 2.1.2]{capotheta}, the general line bundle $L$ on a fiber of $f=\nu_Z^*$ satisfies $h^0(C,L)=1$. Since the components 
$W_{\ul d,Z}(C)$ are a union of fibers of $f$ we conclude that
  $W_{\ul d,Z}(C)$ is irreducible. This shows furthermore that 
the general $L$ on $W_{\ul d,Z}(C)$ satisfies $h^0(C,L)=1$.
\end{proof}

\begin{Thm}\label{wdzcomponent}
Let $C$ be a nodal curve of genus $g$. Fix a semistable $\ul d\in\Zbb^{\gamma}$ of total degree $g-1$. The irreducible components of $W_{\ul d}(C)$ are the subsets
 $W_{\ul d,Z}(C)$ such that
 $Z$ is a connected nonempty subcurve of $C$ and $\ul d_Z-\ul{\deg}(\Ocal_Z(Z\cap Z'))$ is an effective semistable multidegree on $Z$ of total degree $g_Z-1$.

Moreover, if  $Z_1$ and $Z_2$ are distinct subcurves as above then $W_{\ul d,Z_1}(C)$ and  $W_{\ul d,Z_2}(C)$ are distinct components of $W_{\ul d}(C)$.
\end{Thm}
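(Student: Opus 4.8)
The plan is to read the theorem off the three prior results as pure assembly: the covering of Proposition \ref{decompanyd}, the dimension count of Proposition \ref{wdzdim}, and the irreducibility criterion of Proposition \ref{wdzirred}, with Beauville's equidimensionality (Proposition \ref{beauville}) supplying the glue. Throughout I would use that $J_C^{\ul d}$ has dimension $g$, so that ``codimension $1$'' and ``dimension $g-1$'' are interchangeable, exactly as in the proof of Proposition \ref{wdzdim}. The direct half is immediate: if $Z$ is a connected nonempty subcurve for which $\ul d_Z-\ul{\deg}(\Ocal_Z(Z\cap Z'))$ is effective semistable of total degree $g_Z-1$, then $Z\in\Scal(\ul d)$, Proposition \ref{wdzdim} gives $\dim W_{\ul d,Z}(C)=g-1$, and Proposition \ref{wdzirred} makes $W_{\ul d,Z}(C)$ irreducible and a component of $W_{\ul d}(C)$.

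Next I would show these exhaust all components. Let $W$ be an arbitrary irreducible component of $W_{\ul d}(C)$. From Proposition \ref{decompanyd} we may write $W=\bigcup_{Z\in\Scal(\ul d)}\bigl(W\cap W_{\ul d,Z}(C)\bigr)$, and irreducibility of $W$ forces $W\subseteq W_{\ul d,Z}(C)$ for a single $Z\in\Scal(\ul d)$. By Proposition \ref{beauville}(i) the variety $W_{\ul d}(C)$ is purely of dimension $g-1$, so $\dim W=g-1$ while $\dim W_{\ul d,Z}(C)\leq g-1$; together with the inclusion this forces $\dim W_{\ul d,Z}(C)=g-1$. Proposition \ref{wdzdim} now certifies that $Z$ satisfies the stated conditions, and Proposition \ref{wdzirred} makes $W_{\ul d,Z}(C)$ irreducible of dimension $g-1$, so that $W=W_{\ul d,Z}(C)$.

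For the distinctness claim I would argue by contradiction. Suppose $Z_1\neq Z_2$ both qualify yet $W_{\ul d,Z_1}(C)=W_{\ul d,Z_2}(C)$. By Proposition \ref{wdzirred} a general line bundle $L$ on this common component satisfies $h^0(C,L)=1$, so Lemma \ref{h01} attaches to $L$ a \emph{unique} subcurve $Z$ with $L\in W_{\ul d,Z}(C)$. But $L$ lies in both $W_{\ul d,Z_1}(C)$ and $W_{\ul d,Z_2}(C)$, forcing $Z_1=Z=Z_2$, contradicting $Z_1\neq Z_2$.

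I expect the statement to be essentially bookkeeping, since the substantive content already sits in Propositions \ref{wdzdim} and \ref{wdzirred}. The one step deserving care is the exhaustion argument: it is irreducibility of $W$ that localizes it inside a single $W_{\ul d,Z}(C)$, and it is Beauville's purity of dimension that upgrades the inclusion $W\subseteq W_{\ul d,Z}(C)$ to an equality. Without equidimensionality one could not a priori rule out a genuine component $W$ sitting properly inside a piece $W_{\ul d,Z}(C)$ of dimension strictly larger than $g-1$, and the clean correspondence with subcurves would break down.
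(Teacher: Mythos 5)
Your proof is correct and takes essentially the same route as the paper, whose proof of this theorem is precisely the assembly you describe: Propositions \ref{decompanyd}, \ref{wdzdim} and \ref{wdzirred} for the first statement, and Proposition \ref{wdzirred} together with Lemma \ref{h01} for the distinctness claim. Your write-up simply makes explicit the bookkeeping the paper leaves implicit, in particular the use of Proposition \ref{beauville}(i) to upgrade the inclusion $W\subseteq W_{\ul d,Z}(C)$ to an equality.
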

\begin{proof}
The first statement follows from Propositions \ref{decompanyd}, \ref{wdzdim} and \ref{wdzirred}.
The second statement follows from Proposition \ref{wdzirred} and Lemma \ref{h01}.
\end{proof}

\subsection{Twisted Abel loci}

In this section  we show that certain irreducible components of the Brill-Noether locus $W_{\ul d}(C)$ may be interpreted as closure of images of twisted Abel maps.

Fix a smoothing of a nodal curve $C$ and an effective multidegree $\ul e$ on $C$. For each twister $\Ocal_C(T)$ we define
the \emph{twisted Abel loci}
$$A_{\ul e,T}(C)=A_{\ul e}(C)\otimes \Ocal_C(T).$$
Then $A_{\ul e,T}(C)$ is a closed irreducible subset of the Jacobian of $C$ and set-theoretically we have
$$A_{\ul e,T}(C)=\overline{\{L\otimes \Ocal_C(T)\ |\ L\in J_C^{\ul e}\text{ is effective}\}}.$$
Note that  $A_{\ul e,C}(C)=A_{\ul e}(C)$.

\begin{Lem}\label{lemaAeT}
Let $C$ be a nodal curve of genus $g$ and let $\Ocal_C(T)$ be a  twister of $C$. Fix an effective multidegree $\ul e$ on $C$ and set
$\ul d=\ul e+\ul{\deg}(\Ocal_C(T))$. Then
\begin{enumerate}[(a)]
\item $A_{\ul e,T}(C)\cong A_{\ul e}(C)$;
\item $A_{\ul e,T}(C)\subset W_{\ul d}(C)$. Moreover, if both $\ul e$ and $\ul d$ are semistable of total degree $g-1$, then $A_{\ul e,T}(C)$ is an irreducible component of $W_{\ul d}(C)$;
\item if $T=Z'$ for a subcurve $Z\subset C$ then $Z\in\mathcal{S}(\ul d)$ and
$A_{\ul e,Z'}(C)\subset W_{\ul d,Z}(C).$
\end{enumerate}
\end{Lem}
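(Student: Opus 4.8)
The plan is to treat the three items in order of increasing difficulty, with (a) purely formal, (c) a restriction computation, and the inclusion in (b) carrying the geometric content. For (a), I would note that tensoring by the fixed line bundle $\Ocal_C(T)$ is an isomorphism of Jacobians $J_C^{\ul e}\to J_C^{\ul d}$, $M\mapsto M\otimes\Ocal_C(T)$, with inverse $M\mapsto M\otimes\Ocal_C(T)^{-1}$. Since $A_{\ul e,T}(C)$ is by definition the image of $A_{\ul e}(C)$ under this isomorphism, the restriction is an isomorphism onto its image, giving $A_{\ul e,T}(C)\cong A_{\ul e}(C)$.

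For the inclusion $A_{\ul e,T}(C)\subset W_{\ul d}(C)$ in (b), I would work on the fixed smoothing $\Ccal/B$ used to define the twister. Because $W_{\ul d}(C)$ is closed (being cut out by the condition $h^0\geq 1$) and the effective bundles are dense in $A_{\ul e}(C)$, it suffices to treat $L$ in the image of the Abel map, say $L=\Ocal_C(D)$ with $D$ an effective divisor supported on smooth points of $C$. I would then spread the points of $D$ out to sections of $\Ccal/B$ (after a base change if necessary) and let $\mathcal{L}$ be the associated line bundle $\Ocal_{\Ccal}(-)$ on $\Ccal$, so that $\mathcal{L}|_{C_0}=L$ while each $\mathcal{L}_b$ is effective, hence carries a nonzero section, for every $b$. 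The key point is that $T$ is supported on the central fiber, so $\Ocal_{\Ccal}(T)$ restricts trivially to each $C_b$ with $b\ne0$; thus $\mathcal{L}':=\mathcal{L}\otimes\Ocal_{\Ccal}(T)$ satisfies $\mathcal{L}'_b=\mathcal{L}_b$ for $b\ne0$ and $\mathcal{L}'|_{C_0}=L\otimes\Ocal_C(T)$. Upper semicontinuity of $h^0$ in the flat family $\mathcal{L}'$ then forces $h^0(C,L\otimes\Ocal_C(T))\ge h^0(C_b,\mathcal{L}'_b)\ge1$, so $L\otimes\Ocal_C(T)\in W_{\ul d}(C)$; passing to the closure yields the inclusion.

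For the ``moreover'' part of (b), I would combine (a) with the cited dimension statements: by (a) and Proposition \ref{caporaso}(ii) the set $A_{\ul e,T}(C)\cong A_{\ul e}(C)$ is irreducible of dimension $g-1$, while by Proposition \ref{beauville}(i) the locus $W_{\ul d}(C)$ has pure dimension $g-1$; an irreducible closed subset of dimension $g-1$ contained in a variety of pure dimension $g-1$ must equal one of its components. Finally, for (c) with $T=Z'$, I would compute restrictions using $\Ocal_C(Z')|_Z=\Ocal_Z(Z\cap Z')$: on one hand $\ul d_Z-\ul{\deg}(\Ocal_Z(Z\cap Z'))=\ul e_Z$, which is effective because $\ul e$ is, so $Z\in\mathcal{S}(\ul d)$; on the other hand, for effective $L$ one has $(L\otimes\Ocal_C(Z'))|_Z(-Z\cap Z')=L|_Z$, and $L|_Z$ is effective as the restriction of an effective divisor, so $L\otimes\Ocal_C(Z')\in W_{\ul d,Z}(C)$, and taking closures gives the inclusion.

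The main obstacle is the smoothing argument in (b): I must ensure that a model $\mathcal{L}$ exists restricting to $L$ on the central fiber and to an effective bundle on the nearby fibers, and that the twister $\Ocal_{\Ccal}(T)$ is trivial off the central fiber, so that twisting leaves the nearby fibers untouched while moving the central fiber to the desired multidegree $\ul d$. The spreading-out of the points and the possible base change are routine but must be arranged so that upper semicontinuity applies to the single family $\mathcal{L}'$, and one should check that the base change neither disturbs the identification of the central fiber with $C$ nor the relevant value of $h^0$.
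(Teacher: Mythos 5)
Your proof is correct, but for the key inclusion $A_{\ul e,T}(C)\subset W_{\ul d}(C)$ you take a genuinely different route from the paper. The paper argues entirely on the central fiber: writing $T=\sum n_iC_i$ and replacing $T$ by $T'=T-nC\geq 0$ with $n=\min_i n_i$ (harmless since $\Ocal_C(C)\cong\Ocal_C$), it lets $Z$ be the union of the components with $n_i=n$, decomposes $T'=Z_1+\cdots+Z_s$ into subcurves with $Z_1\cup\cdots\cup Z_s=Z'$, and computes $\Ocal_C(T)|_Z=\Ocal_Z(\sum a_iP_i)$ with all $a_i\geq1$ at the points $P_i$ of $Z\cap Z'$; hence for $M=L\otimes\Ocal_C(T)$ with $L$ effective, $M|_Z(-Z\cap Z')=L|_Z\otimes\Ocal_Z(\sum(a_i-1)P_i)$ is effective, and the injection $H^0(Z,M|_Z(-Z\cap Z'))\hookrightarrow H^0(C,M)$ from (\ref{eqprop21}) gives $M\in W_{\ul d,Z}(C)\subset W_{\ul d}(C)$. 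This proves (b) and (c) in one stroke and actually yields more than the lemma states: \emph{every} twisted Abel locus lands inside a specific $W_{\ul d,Z}(C)$, with $Z$ canonically attached to $T$, which is the containment exploited later (Theorem \ref{thmAcomp}). Your degeneration argument instead passes to nearby fibers of the smoothing, using that $\Ocal_{\Ccal}(T)$ is trivial off the central fiber together with semicontinuity; this is valid and conceptually transparent, but it only gives containment in $W_{\ul d}(C)$, so you must (and correctly do) prove (c) separately via the restriction computation $\Ocal_C(Z')|_Z=\Ocal_Z(Z\cap Z')$. The caveat you flag at the end does need to be pinned down: the auxiliary base change must be \emph{\'etale}, for then $\Ccal'\to\Ccal$ is \'etale, the divisor $T$ pulls back to itself and the twister restricted to the central fiber is still the given $\Ocal_C(T)$; a ramified base change (locally $xy=t\mapsto xy=t^e$) makes the reduced components non-Cartier at the nodes, so ``the twister'' of the new family would change or fail to be defined. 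Since a smooth morphism admits sections through prescribed smooth points of a fiber after \'etale base change, this repair is routine. Items (a) and the ``moreover'' part of (b) coincide with the paper's argument: tensoring by $\Ocal_C(T)$ is an isomorphism $J_C^{\ul e}\to J_C^{\ul d}$, and an irreducible closed subset of dimension $g-1$ inside $W_{\ul d}(C)$, which has pure dimension $g-1$ by Proposition \ref{beauville}, must be one of its components.
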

\begin{proof}
Statement (a) is obvious. 
We show (b) and (c). If $T=\sum_{i=1}^{\gamma} n_iC_i$, where $C_1,\ldots,C_{\gamma}$ are the irreducible components of $C$, we let $n$ be the minimum of the $n_i$ and let $$T'=T-nC=\sum_{i=1}^{\gamma}(n_i-n)C_i.$$
 Note that $\Ocal_C(T)=\Ocal_C(T')$. Let $Z$ be the subcurve given as the union of components $C_i$ such that $n_i=n$. 

Let $M$ be a line bundle in the interior of $ A_{\ul e,T}(C)$, say $M=L\otimes\Ocal_C(T)$ where $L$ is an effective line bundle on $C$.
Recall from (\ref{eqprop21}) that $H^0(M|_Z(-Z\cap Z'))\hookrightarrow H^0(M)$.
 We will show that $H^0(M|_Z(-Z\cap Z'))\neq0$.
For this we set $Z\cap Z'=\{P_1,\ldots,P_r\}$. 
Now, if $T'=Z_1+\cdots+Z_s$ for some subcurves $Z_j$ of $C$ then 
$Z_1\cup\ldots\cup Z_s=Z'$ and hence
$$\Ocal_C(T)|_Z=\bigotimes_{j=1}^s \Ocal_C(Z_i)|_Z=\bigotimes_{j=1}^s \Ocal_Z(Z_i\cap Z)=\Ocal_Z(\sum_{i=1}^r a_iP_i)$$
for some positive integers $a_1,\ldots,a_r$.
Thus 
$$M|_Z(-Z\cap Z')=L|_Z\otimes \Ocal_Z(\sum_{i=1}^r (a_i-1)P_i)$$
is effective on $Z$ showing that $M\in W_{\ul d,Z}(C)$. Since $W_{\ul d,Z}(C)$ is closed, this implies (c) and the first part of (b). 
The second part of (b) follows from the fact  that $A_{\ul e}(C)$ is irreducible of dimension $g-1$, by Proposition \ref{caporaso}, and $A_{\ul e,T}(C)$ is isomorphic to $A_{\ul e}(C)$, by (a).
\end{proof}

\begin{Prop}\label{edssest}
Let $C$ be a nodal curve of genus $g$. Fix a semistable multidegree $\ul d$ on $C$ of total degree $g-1$ and a subcurve $Z$ of $C$. Let
 $\ul e=\ul d-\ul{\deg}(\Ocal_C(Z'))$ and assume $\ul e_Z$ is a semistable multidegree on $Z$ of total degree $g_Z-1$. Then $\ul e$ is a semistable multidegree on $C$ of total degree $g-1$.
\end{Prop}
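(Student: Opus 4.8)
The plan is to verify the reduced semistability inequality (\ref{BIg-1}) for $\ul e$ directly, subcurve by subcurve. Since $\Ocal_C(Z')$ is a twister it has total degree $0$, so $|\ul e|=|\ul d|=g-1$ and only semistability is at issue; moreover if $Z=C$ the twister is trivial and $\ul e=\ul d$, so I may assume $\emptyset\neq Z\subsetneq C$. By (\ref{BIg-1}) I must show $e_W\geq g_W-n_W$ for every nonempty proper subcurve $W\subsetneq C$. Writing $A:=W\cap Z$ and $B:=W\cap Z'$, so that $W=A\cup B$ with $A\subseteq Z$ and $B\subseteq Z'$, I first record the effect of the twist componentwise. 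From the remarks on $\Ocal_C(Z)$ applied to $Z'$, one has $\Ocal_C(Z')|_Z=\Ocal_Z(Z\cap Z')$ and $\Ocal_C(Z')|_{Z'}=\Ocal_{Z'}(-Z\cap Z')$, so the twist has degree $\#(C_i\cap Z')$ on a component $C_i\subset Z$ and degree $-\#(C_i\cap Z)$ on a component $C_i\subset Z'$. Subtracting from $\ul d$ and summing gives the decomposition
\[
e_W=e_A+e_B,\qquad e_A=d_A-\#(A\cap Z'),\qquad e_B=d_B+\#(B\cap Z),
\]
with the convention that an empty subcurve contributes $0$.

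The heart of the argument is to bound $e_A$ and $e_B$ separately using the two semistability hypotheses and then glue them with the genus formula. For $e_A$: since $\ul e_Z$ is semistable of total degree $g_Z-1$ on $Z$, I claim $e_A\geq g_A-n_A$ for every $A\subseteq Z$ --- for $\emptyset\neq A\subsetneq Z$ this is semistability of $\ul e_Z$, for $A=Z$ it is $e_Z=g_Z-1\geq g_Z-n_Z$ (as $n_Z\geq 1$), and for $A=\emptyset$ it is trivial. For $e_B$: splitting $\#(B\cap Z)=\#(A\cap B)+\#(B\cap \widetilde A)$ with $\widetilde A:=\overline{Z-A}$, and using semistability of $\ul d$ on the proper subcurve $B\subseteq Z'$, namely $d_B\geq g_B-n_B$, I obtain $e_B\geq (g_B-n_B)+\#(A\cap B)+\#(B\cap \widetilde A)$.

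It then remains to combine these. Applying the genus formula (\ref{genusform}) to the curve $W$ with its subcurve $A$ and complement $B$ gives $g_W-n_W=(g_A-n_A)+(g_B-n_B)+\#(A\cap B)+1-n_W$. Adding the two bounds above and subtracting this identity, the terms $(g_A-n_A)$, $(g_B-n_B)$ and $\#(A\cap B)$ all cancel, leaving
\[
e_W-(g_W-n_W)\geq \#(B\cap \widetilde A)+n_W-1\geq 0,
\]
because $n_W\geq 1$ and intersection numbers are nonnegative. This is exactly (\ref{BIg-1}) for $W$. The degenerate cases, where the genus formula for $W$ does not apply, are immediate: if $B=\emptyset$ then $W=A\subseteq Z$ and the conclusion is the bound $e_A\geq g_A-n_A$, while if $A=\emptyset$ then $W=B\subseteq Z'$ and $e_W=d_B+\#(B\cap Z)\geq d_B\geq g_B-n_B$.

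I expect the only real obstacle to be bookkeeping rather than conceptual: getting the signs of the twister degrees right on the two sides $Z$ and $Z'$, and correctly splitting $\#(B\cap Z)$ into the part $\#(A\cap B)$ (which cancels against the genus-formula term) and the part $\#(B\cap\widetilde A)$ (which, together with $n_W\geq 1$, furnishes the slack). Once the decomposition $e_W=e_A+e_B$ is in place, the inequality follows formally from the semistability of $\ul e_Z$ on $Z$, the semistability of $\ul d$ on $C$, and the genus formula.
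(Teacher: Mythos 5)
Your argument is, in substance, the paper's own proof run in the direct rather than the contrapositive direction: the paper assumes a violating subcurve $Y$ (taken connected), splits it as $Y=Y_1\cup Y_2$ with $Y_1\subseteq Z$ and $Y_2\subseteq Z'$, applies exactly your two bounds $e_{Y_1}\geq g_{Y_1}-n_{Y_1}$ and $e_{Y_2}=d_{Y_2}+\#(Y_2\cap Z)\geq g_{Y_2}-n_{Y_2}+\#(Y_2\cap Z)$, and derives from (\ref{genusform}) the contradiction $\#(Z\cap Y_2)+1\leq\#(Y_1\cap Y_2)$ --- which is precisely your slack term $\#(B\cap\widetilde A)=\#(B\cap Z)-\#(A\cap B)$ being negative. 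So the decomposition $W=(W\cap Z)\cup(W\cap Z')$, the two invocations of semistability, the bookkeeping of the twister degrees, and the use of the genus formula all coincide with the paper's; your handling of the degenerate cases $A=\emptyset$, $B=\emptyset$, $Z=C$ is also correct.

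There is, however, one slip to repair. Formula (\ref{genusform}) is proved for a \emph{connected} curve (its proof uses $h^0(\Ocal_C)=1$, i.e.\ the ``$+1$'' is really ``$+n_C$''), whereas your $W$ is an arbitrary nonempty proper subcurve, possibly disconnected. For disconnected $W$ the correct identity, obtained from the same Mayer--Vietoris sequence by additivity of Euler characteristics, is $g_W-n_W=(g_A-n_A)+(g_B-n_B)+\#(A\cap B)$, with no ``$+1-n_W$'' term. Consequently your displayed identity, and with it the claimed bound $e_W-(g_W-n_W)\geq\#(B\cap\widetilde A)+n_W-1$, is not valid when $n_W\geq2$: if the inequalities for $e_A$ and $d_B$ are tight, then $e_W-(g_W-n_W)=\#(B\cap\widetilde A)$, strictly smaller than your claimed lower bound. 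The conclusion survives, and either of two one-line fixes works: (a) reduce to connected $W$ at the outset, as the paper does, which is legitimate because (\ref{BIg-1}) is additive over connected components, $e_W-(g_W-n_W)=\sum_i\bigl(e_{W_i}-(g_{W_i}-1)\bigr)$, after which $n_W=1$ and your computation is correct as written; or (b) keep general $W$ and use the corrected identity, which yields $e_W-(g_W-n_W)\geq\#(B\cap\widetilde A)\geq0$, still exactly what (\ref{BIg-1}) requires.
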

\begin{proof}
First note that $|\ul e|=|\ul d|=g-1$ since the total degree of a twister is zero.
If $\ul e$ is not semistable then there exists a subcurve $Y$ of $C$ such that $e_Y\leq g_Y-n_Y-1$. We may assume $Y$ is connected so that $n_Y=1$. 
Let $Y_1$ (resp. $Y_2$) be the subcurve given as the union of the components of $Y$ contained in $Z$ (resp. $Z'$). Then $Y=Y_1\cup Y_2$.

First we notice that both $Y_1$ and $Y_2$ are nonempty. 
Indeed, if $Y_2$ is empty, then $Y=Y_1$ is a subcurve of $Z$. Thus, since $\ul e_Z$ is semistable of total degree $g_Z-1$, we have $e_Y\geq g_Y-n_{Y}$, contradicting the choice of $Y$. Now if $Y_1$ is empty then $Y=Y_2$ is a subcurve of $Z'$ and thus
$$e_Y=d_Y+\#(Z\cap Y)\geq g_Y-n_Y+\#(Z\cap Y)\geq g_Y-n_Y$$
again contradicting the choice of $Y$.

Then $Y_1$ ad $Y_2$ are nonempty and 
$$e_{Y_1}\geq g_{Y_1}-n_{Y_1}\quad\text{and}\quad 
e_{Y_2}\geq g_{Y_2}-n_{Y_2}+\#(Z\cap Y_2).$$
Now since $e_Y=e_{Y_1}+e_{Y_2}$, we have
$$g_{Y_1}-n_{Y_1}+ g_{Y_2}-n_{Y_2}+\#(Y_2\cap Z)\leq e_Y\leq g_Y-2.$$
By (\ref{genusform}) we have that
$$g_Y=g_{Y_1}+g_{Y_2}+\#(Y_1\cap Y_2)+1-n_{Y_1}-n_{Y_2}$$
which gives
$\#(Z\cap Y_2)+1\leq \#(Y_1\cap Y_2)$. But this contradicts the fact that $Y_1\subset Z$ thus showing that $\ul e$ is semistable.
\end{proof}

\begin{Thm}\label{thmAcomp}
Let $C$ be a nodal curve of genus $g$. Fix a semistable multidegree $\ul d$ on $C$ of total degree $g-1$. Let $W_{\ul d,Z}(C)$ be an irreducible component of $W_{\ul d}(C)$. If  $\ul e=\ul d-\ul{\deg}(\Ocal_C(Z'))$ is effective then  $W_{\ul d,Z}(C)=A_{\ul e,Z'}(C)$.
\end{Thm}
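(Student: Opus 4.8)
The plan is to establish the two inclusions separately, replacing the harder one by a dimension count. The inclusion $A_{\ul e,Z'}(C)\subseteq W_{\ul d,Z}(C)$ is already in hand: since $\ul e$ is effective by hypothesis, Lemma \ref{lemaAeT}(c), applied with $T=Z'$, yields exactly this containment (and along the way confirms $Z\in\Scal(\ul d)$). So the entire content of the theorem is to upgrade this inclusion to an equality, and I would do so by showing that both sides are irreducible closed subsets of $J_C^{\ul d}$ of the same dimension $g-1$, rather than attempting the reverse inclusion directly.

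First I would pin down the dimension of the right-hand side. Because $W_{\ul d,Z}(C)$ is assumed to be an irreducible component of $W_{\ul d}(C)$ and $\ul d$ is semistable of total degree $g-1$, Proposition \ref{beauville}(i) forces $\dim W_{\ul d,Z}(C)=g-1$. Feeding this into Proposition \ref{wdzdim}, which characterizes precisely when $W_{\ul d,Z}(C)$ attains dimension $g-1$, tells me that $Z$ is connected and that $\ul e_Z=\ul d_Z-\ul{\deg}(\Ocal_Z(Z\cap Z'))$ is an effective semistable multidegree on $Z$ of total degree $g_Z-1$.

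Next I would compute the dimension of the left-hand side. The key intermediate fact is that $\ul e$ is itself semistable on $C$: this is exactly the conclusion of Proposition \ref{edssest}, whose hypothesis (that $\ul e_Z$ is semistable of total degree $g_Z-1$) I have just verified in the previous step. Since $\ul e$ is moreover effective and of total degree $g-1$, Proposition \ref{caporaso}(ii) gives $\dim A_{\ul e}(C)=g-1$, and by Lemma \ref{lemaAeT}(a) we have $A_{\ul e,Z'}(C)\cong A_{\ul e}(C)$, whence $\dim A_{\ul e,Z'}(C)=g-1$ as well.

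Finally I would conclude with the standard nested-irreducibles argument. Both sets are irreducible—$A_{\ul e,Z'}(C)$ as a translate (by $\Ocal_C(Z')$) of the closure of the image of an Abel map, and $W_{\ul d,Z}(C)$ as an irreducible component—and we have $A_{\ul e,Z'}(C)\subseteq W_{\ul d,Z}(C)$ with both of dimension $g-1$, so they must coincide. I expect the only delicate point to be the bookkeeping that transports the hypothesis ``$W_{\ul d,Z}(C)$ is a component'' into the semistability of $\ul e_Z$ (via Proposition \ref{wdzdim}) and then of $\ul e$ (via Proposition \ref{edssest}); once those are lined up, the equality is forced purely on dimensional grounds.
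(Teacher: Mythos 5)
Your proposal is correct and follows essentially the same route as the paper's own proof: Lemma \ref{lemaAeT} for the inclusion $A_{\ul e,Z'}(C)\subseteq W_{\ul d,Z}(C)$ and its irreducibility, Proposition \ref{wdzdim} to extract semistability of $\ul e_Z$ from the component hypothesis, Proposition \ref{edssest} to promote it to semistability of $\ul e$, and Proposition \ref{caporaso} for $\dim A_{\ul e,Z'}(C)=g-1$, forcing equality of nested irreducibles. Your only addition is to make explicit the appeal to Proposition \ref{beauville}(i) for $\dim W_{\ul d,Z}(C)=g-1$, which the paper leaves implicit.
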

\begin{proof}
First note that, by Lemma \ref{lemaAeT}, $A_{\ul e,Z'}(C)\cong A_{\ul e}(C)$ is irreducible and
$$A_{\ul e,Z'}(C)\subset W_{\ul d,Z}(C) \subset W_{\ul d}(C).$$
By Proposition \ref{wdzdim}, $\ul e_Z$ is semistable and hence, by Proposition \ref{edssest}, so is
$\ul e$.
Thus, by Proposition \ref{caporaso}, $A_{\ul e,Z'}(C)$ has dimension $g-1$. So $A_{\ul e,Z'}(C)$ is an irreducible component of $W_{\ul d}(C)$.
\end{proof}

The hypothesis of the multidegree $\ul e$ being effective in the above theorem is a necessary one. However, this condition becomes trivial if, for instance, the components of $C$ have positive genus.

The next result shows that, if two semistable multidegrees of degree $g-1$ differ by a multidegree of a twister, then the associated Brill-Noether loci are isomorphic.

\begin{Thm}\label{wdiso}
Let $C$ be a nodal curve of genus $g$. Assume the irreducible components of $C$ have positive genus.
Fix semistable multidegrees $\ul d$ and $\ul e$ on $C$ of total degree $g-1$ such that $\ul e=\ul d+\ul{\deg}(\Ocal_C(T))$  where $\Ocal_C(T)$ is a twister on $C$. Then $W_{\ul d}(C)$ and $W_{\ul e}(C)$ have isomorphic components.
\end{Thm}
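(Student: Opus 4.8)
The plan is to realize the passage from $W_{\ul d}(C)$ to $W_{\ul e}(C)$ through the translation automorphism of the Jacobian induced by the twister. Write $t_T$ for the map $J_C^{\ul d}\to J_C^{\ul e}$ sending $L\mapsto L\otimes\Ocal_C(T)$; since $\ul e=\ul d+\ul{\deg}(\Ocal_C(T))$ this is an isomorphism of varieties, with inverse $t_{-T}$ given by tensoring with $\Ocal_C(T)^{-1}=\Ocal_C(-T)$. The crux of the argument is that, even though $t_T$ does not respect the condition of carrying a nonzero section and hence need not map $W_{\ul d}(C)$ into $W_{\ul e}(C)$ in any section-preserving way, it does carry each irreducible component of $W_{\ul d}(C)$ isomorphically onto an irreducible component of $W_{\ul e}(C)$.

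First I would fix an irreducible component of $W_{\ul d}(C)$. By Theorem \ref{wdzcomponent} it equals $W_{\ul d,Z}(C)$ for a connected subcurve $Z$ with $\ul d_Z-\ul{\deg}(\Ocal_Z(Z\cap Z'))$ effective semistable of total degree $g_Z-1$. Put $\ul f:=\ul d-\ul{\deg}(\Ocal_C(Z'))$, so that $\ul f_Z=\ul d_Z-\ul{\deg}(\Ocal_Z(Z\cap Z'))$. By Proposition \ref{edssest} the multidegree $\ul f$ is semistable of total degree $g-1$, and since every component of $C$ has positive genus, the inequality (\ref{BIg-1}) applied to each single component gives $f_i\geq g_{C_i}-1\geq0$; thus $\ul f$ is effective. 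Consequently Theorem \ref{thmAcomp} applies and yields $W_{\ul d,Z}(C)=A_{\ul f,Z'}(C)=A_{\ul f}(C)\otimes\Ocal_C(Z')$.

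Next I would compute the image. Applying $t_T$ gives $t_T(W_{\ul d,Z}(C))=A_{\ul f}(C)\otimes\Ocal_C(Z'+T)=A_{\ul f,Z'+T}(C)$, where $Z'+T$ is again a twister. Because $\ul f$ is effective and semistable, $\ul e$ is semistable by hypothesis, and $\ul e=\ul f+\ul{\deg}(\Ocal_C(Z'+T))$, Lemma \ref{lemaAeT}(b) shows that $A_{\ul f,Z'+T}(C)$ is an irreducible component of $W_{\ul e}(C)$. As $t_T$ is an isomorphism of Jacobians, its restriction realizes $W_{\ul d,Z}(C)\cong A_{\ul f,Z'+T}(C)$, so corresponding components are isomorphic. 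Finally, $t_T$ is injective and, by the last assertion of Theorem \ref{wdzcomponent}, distinct subcurves give distinct components, so this assignment is injective on components; repeating the argument for the inverse isomorphism $t_{-T}$ (interchanging $\ul d$ and $\ul e$ and replacing $T$ by $-T$) shows that every component of $W_{\ul e}(C)$ arises this way, which gives the desired bijection.

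The step I expect to be the main obstacle is precisely the failure of $t_T$ to preserve the presence of a section: the statement that translation by a twister sends components to components is not formal and rests entirely on the twisted-Abel-loci description. Making that description available forces every intermediate multidegree $\ul f=\ul d-\ul{\deg}(\Ocal_C(Z'))$ (and its analogue for $\ul e$) to be effective so that Theorem \ref{thmAcomp} and Lemma \ref{lemaAeT} can be invoked, and this is exactly the point at which the positive-genus hypothesis on the components of $C$ is indispensable. I would therefore take care to verify effectivity of each such $\ul f$ before applying those results.
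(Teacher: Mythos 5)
Your proposal is correct and takes essentially the same route as the paper's proof: both express each irreducible component of $W_{\ul d}(C)$ as a twisted Abel locus $A_{\ul f,Z'}(C)$ via Theorem \ref{thmAcomp} (the positive-genus hypothesis guaranteeing, through semistability of $\ul f=\ul d-\ul{\deg}(\Ocal_C(Z'))$, that $\ul f$ is effective), then tensor with $\Ocal_C(T)$ to obtain $A_{\ul f,Z'+T}(C)$, which is recognized as an irreducible component of $W_{\ul e}(C)$ by Lemma \ref{lemaAeT}. Your explicit checks of the effectivity of $\ul f$ and of the bijectivity of the component correspondence (via the inverse translation $t_{-T}$) merely make precise steps the paper leaves implicit.
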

\begin{proof}
We will show that there exists a one-to-one correspondence between components of
  $W_{\ul d}(C)$ and $W_{\ul e}(C)$ maping a component of   $W_{\ul d}(C)$ isomorphically to a component of  $W_{\ul e}(C)$.
By Theorem \ref{thmAcomp}, the irreducible components of  $W_{\ul d}(C)$ and $W_{\ul e}(C)$ can be given by twisted Abel loci.
We show that for every component $A_{\ul f,Y'}(C)$ of $W_{\ul d}(C)$ there exists a component of $W_{\ul e}(C)$ isomorphic to it. 

Let  $A_{\ul f,Y'}(C)$ be an irreducible component of $W_{\ul d}(C)$. By Lemma \ref{lemaAeT} we have  $A_{\ul f,Y'}(C)\cong A_{\ul f}(C)$ and
$\ul d=\ul f+\ul{\deg}(\Ocal_C (Y'))$. 
Now note that
$$\ul e=\ul f+\ul{\deg}(\Ocal_C (Y'))+\ul{\deg}(\Ocal_C (T))=
\ul f+\ul{\deg}(\Ocal_C (T')),$$
where $T'=T+Y'$.
Then
 $A_{\ul f}(C)\otimes \Ocal_C(T')$ is a subset of $ W_{\ul e}(C)$ isomorphic to $A_{\ul f}(C)$, and thus, to  $A_{\ul f,Y'}(C)$. In particular it is irreducible of dimension $g-1$, so it is a component of $W_{\ul e}(C)$.
\end{proof}

We believe that the Brill-Noether loci 
whose multidegrees differ by a twister as in the the previous theorem are actually isomorphic. However, to show this one should determine how the irreducible components of each locus intersect. This study has not yet been carried out.

\section{Examples}

In this section we focus on two particular classes of curves, namely two-component curves and circular curves. In each case we
describe explicitly  the irreducible components of the Brill-Noether locus for a strictly semistable multidegree, that is, a semistable multidegree that is not stable.

\subsection{Two-component curves}

Let $C$ be a nodal curve having two irreducible components $C_1$ and $C_2$ of genera $g_1$ and $g_2$, respectively, meeting in $k\geq 1$ nodes. Then the genus of the curve $C$ is $g=g_1+g_2+k-1$.
 Let $\ul d=(d_1,d_2)$ be a multidegree of total degree $g-1$. Then $\ul d$ is semistable if and only if $d_i\geq g_i-1$ for $i=1,2$. Thus we have only two semistable multidegrees that are not stable, namely $\ul d=(g_1-1,g_2-1+k)$ and 
$\ul e=(g_1-1+k,g_2-1)$.
The following result shows that $W_{\ul d}(C)\cong W_{\ul e}(C)$ and can have up to two irreducible components.

\begin{Prop} Let $C$ be a two-component curve as above.
\begin{enumerate}[(a)]
\item If $g_1=g_2=0$ then $W_{\ul d}(C)=W_{\ul e}(C)=\emptyset$;

\item If $g_1=0$ and $g_2\neq0$ then 
$$W_{\ul d}(C)=W_{\ul d,C_2}(C)=A_{\ul e,C_1}(C)\quad\text{and}\quad
W_{\ul e}(C)= A_{\ul e}(C);$$

\item If $g_1\neq0$ and $g_2=0$ then 
$$W_{\ul d}(C)=A_{\ul d}(C)
\quad\text{and}\quad
W_{\ul e}(C)=W_{\ul e,C_1}(C)=A_{\ul d,C_2}(C),$$

\item If $g_1\neq0$ and $g_2\neq0$ then 
$$W_{\ul d}(C)=A_{\ul d}(C)\cup W_{\ul d,C_2}(C)=A_{\ul d}(C)\cup A_{\ul e,C_1}(C),$$
$$W_{\ul e}(C)=A_{\ul e}(C)\cup W_{\ul e,C_1}(C)=A_{\ul e}(C)\cup A_{\ul d,C_2}(C).$$

\end{enumerate}
In any case we have $W_{\ul d}(C)\cong W_{\ul e}(C)$.
\end{Prop}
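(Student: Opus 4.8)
The plan is to read the components of both loci straight off Theorem \ref{wdzcomponent}, exploiting that a two-component curve has only three nonempty subcurves, and then to repackage each component as a twisted Abel locus via Theorem \ref{thmAcomp}. The nonempty subcurves of $C$ are $C_1$, $C_2$ and $C$, and all three are connected because $k\ge 1$; since $C_1$ and $C_2$ are irreducible, the semistability clause of Theorem \ref{wdzcomponent} is automatic for them and only the effectivity and total-degree conditions survive. First I would record once and for all that $\ul{\deg}(\Ocal_C(C_1))=(-k,k)$ and $\ul{\deg}(\Ocal_C(C_2))=(k,-k)$, and that $\Ocal_{C_1}(C_1\cap C_2)$ and $\Ocal_{C_2}(C_1\cap C_2)$ each have degree $k$.

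For $\ul d=(g_1-1,g_2-1+k)$ the three tests become purely numerical. The subcurve $Z=C$ contributes a component exactly when $\ul d$ is effective, that is when $g_1\ge 1$ (the inequality $d_2=g_2-1+k\ge 0$ being automatic), and then $W_{\ul d,C}(C)=A_{\ul d}(C)$ by definition. The subcurve $Z=C_2$ contributes exactly when $d_2-k=g_2-1\ge 0$, i.e. when $g_2\ge 1$. The subcurve $Z=C_1$ yields total degree $d_1-k=g_1-1-k$, which never equals the required $g_{C_1}-1=g_1-1$, so it contributes nothing. The analogous computation for $\ul e=(g_1-1+k,g_2-1)$ is obtained by interchanging the indices: $Z=C$ contributes iff $g_2\ge 1$, $Z=C_1$ iff $g_1\ge 1$, and $Z=C_2$ never. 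Reading off which of $g_1,g_2$ vanish produces cases (a)--(d) verbatim. To rename the nontrivial components I would apply Theorem \ref{thmAcomp}: for $Z=C_2$ one has $\ul d-\ul{\deg}(\Ocal_C(C_1))=\ul e$, effective precisely when $g_2\ge 1$, giving $W_{\ul d,C_2}(C)=A_{\ul e,C_1}(C)$; symmetrically $W_{\ul e,C_1}(C)=A_{\ul d,C_2}(C)$.

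It remains to prove the global isomorphism $W_{\ul d}(C)\cong W_{\ul e}(C)$, which is not delivered by Theorem \ref{wdiso} (that result only pairs up components, and only under $g_1,g_2\ge 1$). The key observation is that $\ul e=\ul d+\ul{\deg}(\Ocal_C(C_2))$, so tensoring by the fixed twister $\Ocal_C(C_2)$ is an isomorphism of Jacobians $J_C^{\ul d}\to J_C^{\ul e}$, with inverse $\otimes\,\Ocal_C(C_1)$, using $\Ocal_C(C_1)\otimes\Ocal_C(C_2)=\Ocal_C(C)\cong\Ocal_C$ (since $\Ocal_{\Ccal}(C)$ is pulled back from the base and restricts trivially to the fiber). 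I would then check this isomorphism carries $W_{\ul d}(C)$ onto $W_{\ul e}(C)$ by tracking components: $A_{\ul d}(C)\mapsto A_{\ul d,C_2}(C)$, while $A_{\ul e,C_1}(C)=A_{\ul e}(C)\otimes\Ocal_C(C_1)\mapsto A_{\ul e}(C)\otimes\Ocal_C(C)=A_{\ul e}(C)$. Thus the single global twist interchanges the two descriptions of the loci simultaneously in every case.

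The main obstacle is to make this last step uniform across the degenerate cases, where a locus may be empty or carry only one component: I must verify that the twisting automorphism of the Jacobian is genuinely independent of which subcurves happen to occur, so that it restricts to an isomorphism of the full (reduced) Brill-Noether loci and not merely to a matching of their components. The positivity $k\ge 1$ and the triviality of $\Ocal_C(C)$ are exactly what guarantee that $\otimes\,\Ocal_C(C_2)$ and its inverse interchange $W_{\ul d}(C)$ and $W_{\ul e}(C)$ simultaneously in cases (a) through (d).
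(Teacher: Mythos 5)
Your proof is correct, and its case analysis is exactly what the paper intends: the paper's entire proof is the citation ``Follows from Theorems \ref{wdzcomponent} and \ref{thmAcomp}. The last statement follows from Lemma \ref{lemaAeT} (a)'', and your enumeration of the three connected nonempty subcurves $C_1$, $C_2$, $C$, with the purely numerical effectivity and total-degree tests (semistability being vacuous on the irreducible subcurves), is precisely the unfolding of those two theorems; your identifications $W_{\ul d,C_2}(C)=A_{\ul e,C_1}(C)$ and $W_{\ul e,C_1}(C)=A_{\ul d,C_2}(C)$ via Theorem \ref{thmAcomp} match the paper as well. Where you genuinely diverge is the final claim $W_{\ul d}(C)\cong W_{\ul e}(C)$. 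The paper deduces it from Lemma \ref{lemaAeT}(a), which only furnishes isomorphisms $A_{\ul e,T}(C)\cong A_{\ul e}(C)$ component by component --- and the authors themselves concede, in the remark following Theorem \ref{wdiso}, that matching components does not by itself yield an isomorphism of the loci, since one must control how the components intersect. Your argument closes exactly this gap in the two-component case: since $\ul e=\ul d+\ul{\deg}(\Ocal_C(C_2))$ and $\Ocal_C(C_1)\otimes\Ocal_C(C_2)=\Ocal_C(C)\cong\Ocal_C$, the single map $L\mapsto L\otimes\Ocal_C(C_2)$ is an isomorphism $J_C^{\ul d}\to J_C^{\ul e}$ with inverse $\otimes\,\Ocal_C(C_1)$, carrying $A_{\ul d}(C)$ to $A_{\ul d,C_2}(C)$ and $A_{\ul e,C_1}(C)$ to $A_{\ul e}(C)\otimes\Ocal_C(C)=A_{\ul e}(C)$, hence carrying all of $W_{\ul d}(C)$ onto $W_{\ul e}(C)$, intersections included, uniformly in cases (a)--(d). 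This works because here one twister serves every component simultaneously, which is precisely what fails in the general setting of Theorem \ref{wdiso}, where the twister $T'=T+Y'$ depends on the component; so your version of the last step is stronger than the paper's one-line citation and actually substantiates the global isomorphism that the paper's own remark identifies as unproven in general.
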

\begin{proof}
Follows from Theorems \ref{wdzcomponent} and \ref{thmAcomp}. The last statement follows from Lemma \ref{lemaAeT} (a).
\end{proof}

\subsection{Circular curves}

Let $C$ be a nodal curve having irreducible components $C_1,\ldots,C_{\gamma}$. We say $C$ is a \emph{circular curve} if 
\begin{enumerate}[(a)]
\item $C_i\cap C_j$ is empty if $j\not\in\{ i-1,i,i+1\}$;

\item $C_i\cap C_{i+1}$ is a single node of $C$ for $i=1,\ldots,\gamma-1$; 

\item $C_1\cap C_{\gamma}$ is a single node of $C$.
\end{enumerate}
If $g_i$ is the genus of $C_i$ for each $i$, then the genus of $C$ is $g=1+\sum_{i=1}^{\gamma}g_i$.

\begin{Lem}\label{ssdegcirc}
Let $C$ be a circular curve of genus $g$ having $\gamma$ components. 
Let $g_i$ be the genus of the $i$-th component of $C$. 
Then $\ul d=(d_1\ldots,d_{\gamma})$ is 
 a  semistable  multidegree of total degree $g-1$ on  $C$  if and only if
\begin{enumerate}[(a)]
\item  $d_i\in\{g_i-1,g_i,g_i+1\}$; and
\item  if  $I^-$ (resp. $I^+$) is the set of $i$ such that $d_i=g_i-1$ (resp.
$d_i=g_i+1$) then either  $I^+=I^-=\emptyset$ or, up to relabeling the components of $C$, we have
$I^-=\{k_1,\ldots,k_{\ell}\}$ and
$I^+=\{j_1,\ldots,j_{\ell}\}$ where
$$1=k_1<j_1<k_2<j_2<\ldots<k_{\ell}<j_{\ell}.$$
\end{enumerate}
Moreover, $\ul d$ is stable if and only if $I^+=I^-=\emptyset$.
\end{Lem}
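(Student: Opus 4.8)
The plan is to rewrite condition (\ref{BIg-1}) entirely in terms of the shifted degrees $a_i := d_i - g_i$, and to reduce the quantification over all subcurves to a purely combinatorial statement about the cyclic sequence $(a_1,\dots,a_{\gamma})$. First I would record two structural facts about the circular shape: every connected proper subcurve $Z$ is an \emph{arc}, i.e.\ a run of consecutive components $C_p,C_{p+1},\dots,C_q$ read cyclically, its complement $Z'$ is again an arc, and for such an arc one has $n_Z=1$ and $g_Z=\sum_{C_i\subset Z}g_i$ (the arithmetic genus of a chain of curves, also deducible from (\ref{genusform})). Consequently inequality (\ref{BIg-1}) for an arc $Z$ reads $\sum_{C_i\subset Z}a_i\geq -1$.

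Next I would reduce semistability to arcs. For a subcurve $Z=Z_1\amalg\cdots\amalg Z_{n_Z}$ decomposed into its connected arcs one has $d_Z-g_Z+n_Z=\sum_j\bigl(d_{Z_j}-g_{Z_j}+1\bigr)$, so (\ref{BIg-1}) holds for every $Z$ exactly when it holds for every arc. Using that $|\ul d|=g-1$ is equivalent to $\sum_i a_i=0$, and that the complement of an arc is an arc, the arc inequality becomes the symmetric bound
$$-1\leq \sum_{C_i\subset Z}a_i\leq 1\quad\text{for every arc }Z.$$
Taking $Z=C_i$ a single component gives $a_i\in\{-1,0,1\}$, which is (a), and $\sum_i a_i=0$ forces $|I^-|=|I^+|=:\ell$.

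The heart of the argument is the combinatorial claim that the bound $\bigl|\sum_{C_i\subset Z}a_i\bigr|\leq 1$ on all arcs is equivalent to the $\pm1$ entries \emph{alternating in sign} around the cycle, which is precisely (b). For the forward direction I argue by contradiction: if two cyclically consecutive nonzero entries, at positions $p$ and $q$ with only zeros strictly between them along one of the two joining arcs, had the same sign, then that short arc would have sum $\pm 2$; since $\sum_i a_i=0$ forces a matching entry of the opposite sign to lie outside it, the arc is proper, contradicting the bound. Hence consecutive nonzeros alternate. For the converse, the nonzero entries inside any arc form a contiguous run of the cyclic alternating sequence; such a run is itself an alternating $\pm1$ sequence and so has sum in $\{-1,0,1\}$ (the intervening zeros do not affect the sum), whence every arc sum lies in $[-1,1]$ and $\ul d$ is semistable. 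Finally, after a cyclic relabeling placing a $(-1)$-entry at position $1$ (possible whenever $\ell\geq 1$), reading the alternating pattern yields exactly $1=k_1<j_1<k_2<\cdots<k_\ell<j_\ell$, giving (b); the case $\ell=0$ is the first alternative $I^+=I^-=\emptyset$.

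For the final sentence, stability replaces (\ref{BIg-1}) by strict inequality; by the same additivity this holds for all $Z$ iff each arc satisfies $\sum_{C_i\subset Z}a_i\geq 0$, and applying this to an arc together with its complementary arc forces $\sum_{C_i\subset Z}a_i=0$ for every arc, in particular $a_i=0$ for each $i$, i.e.\ $I^+=I^-=\emptyset$; conversely, if all $a_i=0$ every arc has strictly positive defect, so $\ul d$ is stable. The main obstacle I anticipate is not any single step but packaging the alternation claim cleanly: I must be careful that the arc realizing a sum of $\pm2$ is genuinely proper in the small-$\gamma$ cases, and I must justify the passage to complements and the reduction to arcs simultaneously, since it is exactly their combination that upgrades the one-sided inequality $\geq -1$ to the symmetric bound $[-1,1]$ underlying the alternation.
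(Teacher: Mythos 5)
Your proof is correct and takes essentially the same route as the paper's: the normalization $a_i=d_i-g_i$ and the symmetric bound $\bigl|\sum_{C_i\subset Z}a_i\bigr|\leq 1$ obtained from complementary arcs are a repackaging of the paper's use of the genus formula (\ref{genusform}) and of complements, and both your alternation-by-contradiction step (a proper arc of sum $\pm2$) and your converse via contiguous alternating runs match the paper's arcs with $d_Z=g_Z-2$ and its bound $-1\leq n^+(Z)-n^-(Z)\leq 1$. The only real difference is cosmetic: you spell out the reduction to connected subcurves, which the paper invokes with ``we may assume $Z$ is connected.''
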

\begin{proof}
Assume first that $\ul d$ is semistable. Then $d_i\geq g_i-1$ by definition. 
Moreover, the subcurve $Z=C_i'$ is connected and again by semistability $d_Z\geq g_Z-1$. Hence $d_i=g-1-d_Z\leq g-g_Z$. 
By (\ref{genusform}) we have
$g=g_Z+g_i+1$,  so we deduce that  $d_i\leq g_i+1$.

Let $n^+$ be the cardinality of $I^+$ and $n^-$ be that  of $I^-$.
Then $\sum_{i=1}^{\gamma} d_i=\sum_{i=1}^{\gamma} g_i+n^+-n^-$. Since $\sum_{i=1}^{\gamma}d_i=g-1=\sum_{i=1}^{\gamma}g_i$, we must have $n^+=n^-$.
In particular, if $\ul d$ is stable then $I^-=\emptyset$ and hence also $I^+=\emptyset$.

To show (b) we note that if for some $1\leq r\leq\ell$
there exists no $j\in I^+$ such that
 $k_r<j<k_{r+1}$, then the semistability condition fails on the subcurve $Z=C_{k_r}\cup\ldots\cup C_{k_{r+1}}$, since
$d_Z=g_{k_r}+\ldots+ g_{k_{r+1}}-2=g_Z-2$.
Likewise, if 
if for some $1\leq r\leq\ell$
there exists no $k\in I^-$ such that
 $j_r<k<j_{r+1}$, then the semistability condition fails on the subcurve $Z'$ where $Z=C_{j_r}\cup\ldots\cup C_{j_{r+1}}$, since
$d_Z=g_Z+2$ which implies that $d_{Z'}=g-1-d_Z=g_{Z'}-2$, by (\ref{genusform}).

Now assume (a) and (b) hold. 
First, note that that $\ul d$ has total degree $g-1$ since
$$d=\sum_{i=1}^{\gamma}g_i+n^+-n^-=\sum_{i=1}^{\gamma}g_i=g-1.$$
Now we show $\ul d$ is semistable. 
Let $Z$ be a proper nonempty subcurve of $C$. We may assume $Z$ is connected. 
By (a) each degree $d_i$ is either $g_i-1$, $g_i$ or $g_i+1$. Moreover, 
let $n^+(Z)$ (resp. $n^-(Z)$) be the number of components $C_i$ of $Z$ such that $i\in I^+$ (resp. $i\in I^-$). 
 Then
$$d_Z=\sum_{C_i\subset Z}g_i+n^+(Z)-n^-(Z)=g_Z+n^+(Z)-n^-(Z)$$
since $g_Z=\sum_{C_i\subset Z}g_i$.
Now, since $Z$ is connected we have by (b) that 
$$-1\leq n^+(Z)-n^-(Z)\leq 1,$$
showing that $d_Z\geq g_Z-1$. Hence $\ul d$ is semistable. 

Furthermore, if $I^+=I^-=\emptyset$, then we have $n^+(Z)=n^-(Z)=0$ and hence $d_Z=g_Z$ for every connected subcurve $Z$ of $C$ and thus $\ul d$ is stable.
\end{proof}

\begin{Prop}\label{wdcirc}
Let $C$ be a circular curve of genus $g$ having $\gamma$ components. 
Let $g_i$ be the genus of the $i$-th component of $C$ and assume $g_i\geq1$ for all $i$. 
Let  $\ul d$ be a strictly semistable multidegree of total degree $g-1$. Set $I^+$ and $I^-$ as in Lemma \ref{ssdegcirc} and 
assume that   $1\in I^-$. Then $\ul d$ is effective
and the number of irreducible components of $W_{\ul d}(C)$ is 
\begin{equation}\label{nd}
n(\ul d)=1
+\sum_{1\leq r, s\leq\ell}(j_r-k_r)(k_{s+1}-j_s),
\end{equation}
where we set $k_{\ell +1}=\gamma+1$. In particular, we have $n(\ul d)\geq 1+\ell^2$.
\end{Prop}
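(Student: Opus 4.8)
The effectiveness of $\ul d$ is immediate: by Lemma \ref{ssdegcirc} each $d_i$ lies in $\{g_i-1,g_i,g_i+1\}$, so $d_i\geq g_i-1\geq 0$ because $g_i\geq1$. For the count my plan is to invoke Theorem \ref{wdzcomponent} and enumerate the connected subcurves $Z\subset C$ for which $\ul e_Z:=\ul d_Z-\ul{\deg}(\Ocal_Z(Z\cap Z'))$ is an effective semistable multidegree on $Z$ of total degree $g_Z-1$. The subcurve $Z=C$ always qualifies, since then $\ul e_Z=\ul d$, which is effective and semistable of total degree $g-1$ by hypothesis; this accounts for the summand $1$ in (\ref{nd}). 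As $C$ is circular, every other connected subcurve is a cyclic arc $Z=C_a\cup C_{a+1}\cup\cdots\cup C_b$, so it remains to count the admissible proper arcs.

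For such an arc one has $k_Z=2$, and $\ul{\deg}(\Ocal_Z(Z\cap Z'))$ has degree $1$ on each end component $C_a,C_b$ (degree $2$ on $C_a$ when $a=b$) and $0$ on interior components. Writing $\sigma_i=1,-1,0$ according as $i\in I^+$, $i\in I^-$, or neither, the relation $d_W=g_W+\sum_{C_i\subset W}\sigma_i$ from the proof of Lemma \ref{ssdegcirc} gives $|\ul e_Z|=d_Z-2=g_Z-1$ precisely when $\sum_{C_i\subset Z}\sigma_i=1$. After checking the standard reduction that semistability of $\ul e_Z$ need only be tested on connected subcurves $W=C_c\cup\cdots\cup C_d$ of $Z$, I would record the formula
$$e_{Z,W}=g_W+\sum_{C_i\subset W}\sigma_i-[c=a]-[d=b],$$
where $[\,\cdot\,]$ denotes the Iverson bracket, so that the inequality (\ref{BIg-1}) on $Z$, namely $e_{Z,W}\geq g_W-1$, reads $\sum_{C_i\subset W}\sigma_i\geq[c=a]+[d=b]-1$. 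Reading the arc from $a$ to $b$ and letting $S_j$ be the partial sum of the $\sigma_i$ through $C_j$ (with value $0$ before $C_a$), this family of inequalities together with the total-degree condition is equivalent to requiring every $S_j\in\{0,1\}$ with final value $1$. In particular the two end components are forced to have $\sigma\neq-1$, which makes effectivity of $\ul e_Z$ automatic since then $d_a,d_b\geq1$.

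The condition that the partial sums stay in $\{0,1\}$ and end at $1$ forces the nonzero $\sigma_i$ met along the arc to alternate, beginning and ending with $+1$. Since the global pattern is $\sigma_{k_1}=\cdots=\sigma_{k_\ell}=-1$ and $\sigma_{j_1}=\cdots=\sigma_{j_\ell}=+1$, interleaved as $k_1<j_1<\cdots<k_\ell<j_\ell$, such an arc must have its first nonzero entry at some $C_{j_r}$ and its last at some $C_{j_s}$. This pins down the endpoints: $a$ ranges over $\{k_r+1,\dots,j_r\}$, giving $j_r-k_r$ choices, and $b$ over $\{j_s,\dots,k_{s+1}-1\}$, giving $k_{s+1}-j_s$ choices with the convention $k_{\ell+1}=\gamma+1$. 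The arc fails to wrap around exactly when $r\leq s$ and wraps exactly when $r>s$, so summing over all pairs $(r,s)$ with $1\leq r,s\leq\ell$ produces $\sum_{r,s}(j_r-k_r)(k_{s+1}-j_s)$ admissible proper arcs; together with $Z=C$ this is (\ref{nd}). For the final bound I would factor the double sum as $\big(\sum_r(j_r-k_r)\big)\big(\sum_s(k_{s+1}-j_s)\big)$; each factor is a sum of $\ell$ positive integers, hence at least $\ell$, so $n(\ul d)\geq1+\ell^2$.

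The step I expect to be the main obstacle is the bookkeeping around the node $C_\gamma\cap C_1$: one must verify that the wrapping arcs are exactly those with $r>s$ and that each is still enumerated by the product $(j_r-k_r)(k_{s+1}-j_s)$, so that the non-wrapping range $r\leq s$ and the wrapping range $r>s$ assemble into the full double sum over $1\leq r,s\leq\ell$. Closely related is the need to confirm that the partial-sum reformulation handles the degenerate single-component arcs ($a=b$, forced to sit at a $+1$ position) correctly, and to justify at the start that testing semistability on connected subcurves suffices.
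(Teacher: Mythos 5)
Your proposal is correct and follows essentially the same route as the paper: both reduce to Theorem \ref{wdzcomponent} and count the admissible proper connected arcs via the windows $a\in\{k_r+1,\ldots,j_r\}$ and $b\in\{j_s,\ldots,k_{s+1}-1\}$, with the non-wrapping ($r\leq s$) and wrapping ($r>s$) arcs corresponding exactly to the paper's three partial counts $n_1$, $n_2$, $n_3$. Your partial-sum criterion $S_j\in\{0,1\}$ with final value $1$ is just a uniform repackaging of the case analysis the paper performs arc by arc (where semistability and effectivity of $\ul e_Z$ are dismissed as ``easy to see''), so it is a tidier verification of the same enumeration rather than a genuinely different argument.
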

\begin{proof}
By Theorem \ref{wdzcomponent}, the irreducible components of $W_{\ul d}(C)$ are the subsets $W_{\ul d,Z}(C)$ where
$Z$ is a  connected nonempty subcurve of $C$ such that 
$\ul e_Z:=\ul d_Z-\ul{\deg}(\Ocal_Z(Z\cap Z'))$ is an effective semistable multidegree on $Z$ of total degree $g_Z-1$.

Since $g_i\geq1$,  all semistable multidegrees on $C$ of total degree $g-1$ are effective, by Lemma \ref{ssdegcirc}. In particular $\ul d$ is effective and
 $A_{\ul d}(C)$ is a component of $W_{\ul d}(C)$ corresponding to the subcurve $Z=C$.

Now let $Z$ be a proper connected subcurve of $C$. Note that $k_Z=2$ and $g_Z=\sum_{C_i\subset Z}g_i$. Moreover, 
$$d_Z=\sum_{C_i\subset Z}d_i=g_Z+\#\{j\in I^+\ |\ C_j\in Z\} -\#\{k\in I^-\ |\ C_k\in Z\}.$$
If $Z$ does not contain  $C_j$ for any $j\in I^+$, then $d_Z\leq g_Z$ and hence $e_Z=d_Z-2\leq g_Z-2$. Therefore $W_{\ul d,Z}(C)$ is not a component of $W_{\ul d}(C)$.
 
Assume  $Z$ contains a 
$C_{j_r}$ for a unique $j_r\in I^+$.  
For simplicity we set we set $k_{\ell +1}=\gamma+1$.
If $Z$ contains either $C_{k_r}$ or $C_{k_{r+1}}$ then $d_Z\leq g_Z$ and thus $e_Z\leq g_Z-2$ and once again 
$W_{\ul d,Z}(C)$ is not a component. 
If $Z$ does not contain  $C_{k_r}$ nor $C_{k_{r+1}}$  then, since $Z$ is connected, we have $Z=C_n\cup\ldots\cup C_m$ where 
$k_r+1\leq n\leq j_r$ and $j_r\leq m\leq k_{r+1}-1$. 
In this case we have $d_Z=g_Z+1$ and thus $e_Z=g_Z-1$. Moreover, since 
$\ul e_Z=\ul d_Z-(1,0,\ldots,0,1)$ and $d_i\geq g_i$ for each $C_i\subset Z$,
 it's easy to see that $\ul e_Z$ is semistable and effective. Thus $W_{\ul d,Z}(C)$ is a component of $W_{\ul d}(C)$. The number of such components is
$$n_1:=\sum_{1\leq r\leq \ell} (j_r-k_r)(k_{r+1}-j_r),$$
where we set $k_{\ell +1}=\gamma+1$.

Now assume $Z$ contains  $C_{j_r}$ and $C_{j_s}$ for some $r<s$ and does not contain $C_{j_t}$ for any $t<r$ or $t>s$. 
Since $Z$ is connected, there are two cases: 
\begin{enumerate}[(i)]
\item $Z\supseteq C_{j_r}\cup\ldots\cup C_{j_s}$;
\item $Z\supseteq C_1\cup\ldots\cup C_{j_r}\cup C_{j_s}\ldots\cup C_{\gamma}$.
\end{enumerate}
First assume (i) $Z$ contains $C_{k_{r+1}}$. 
As in the previous case, if $Z$ contains either $C_{k_r}$ or $C_{k_{s+1}}$, then  $d_Z\leq g_Z$ and thus $e_Z\leq g_Z-2$ and 
$W_{\ul d,Z}(C)$ is not a component. 
If  $Z$ does not contain  $C_{k_r}$ and $C_{k_{s+1}}$ then, since $Z$ is connected, we must have
$Z=C_n\cup\ldots\cup C_m$ where 
$k_r+1\leq n\leq j_r$ and $j_s\leq m\leq k_{s+1}-1$. 
As before, it is easy to see that $\ul e_Z$ is semistable and effective and hence $W_{\ul d,Z}(C)$ is a component of $W_{\ul d}(C)$.
The number of such components is
$$n_2=\sum_{1\leq r<s\leq\ell}(j_r-k_r)(k_{s+1}-j_s).$$

The last case to check is (ii). As before, if $Z$ contains either 
$C_{k_s}$ or $C_{k_{r+1}}$, then  $e_Z\leq g_Z-2$ and 
$W_{\ul d,Z}(C)$ is not a component. 
If  $Z$ does not contain  $C_{k_s}$ and $C_{k_{r+1}}$ then, since $Z$ is connected, we must have
$Z=C_n\cup\ldots\cup C_{\gamma}\cup C_1\cup\ldots\cup C_m$ where 
$k_s+1\leq n\leq j_s$ and $j_r\leq m\leq k_{r+1}-1$.
As before, it is easy to see that $\ul e_Z$ is semistable and effective and hence $W_{\ul d,Z}(C)$ is a component of $W_{\ul d}(C)$. The number of such components is
$$n_3=\sum_{1\leq r<s\leq\ell}(j_s-k_s)(k_{r+1}-j_r).$$

Therefore the number of  components of $W_{\ul d}(C)$ is  $n(\ul d)=1+n_1+n_2+n_3$ and we get (\ref{nd}). To get the last statement, it's enough to note that for every $1\leq r\leq \ell$ we have $j_r-k_r\geq 1$ and $k_{r+1}-j_r\geq1$ and hence, from (\ref{nd}), we get $n(\ul d)\geq 1+\ell+\ell(\ell-1)=1+\ell^2$.
\end{proof}

\begin{Cor}
Let $C$ be a circular curve of genus $g$ having $\gamma=2\ell$ components. 
Let $g_i$ be the genus of the $i$-th component of $C$ and assume $g_i\geq1$ for all $i$. Set
$$\ul d=(g_1-1,g_2+1,g_3-1,\ldots,g_{2\ell}+1).$$ 
Then
 the number of irreducible components of $W_{\ul d}$ is
$1+\ell^2.$
\end{Cor}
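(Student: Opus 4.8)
The plan is to recognize this multidegree as a special case of Proposition \ref{wdcirc} and to evaluate the counting formula (\ref{nd}) for it. First I would identify the sets $I^+$ and $I^-$ of Lemma \ref{ssdegcirc}. Since $d_i=g_i-1$ exactly when $i$ is odd and $d_i=g_i+1$ exactly when $i$ is even, we have $I^-=\{1,3,\ldots,2\ell-1\}$ and $I^+=\{2,4,\ldots,2\ell\}$, both of cardinality $\ell$. Writing these as $I^-=\{k_1,\ldots,k_{\ell}\}$ and $I^+=\{j_1,\ldots,j_{\ell}\}$ in increasing order gives $k_r=2r-1$ and $j_r=2r$, which indeed satisfy the interleaving $1=k_1<j_1<k_2<\cdots<k_{\ell}<j_{\ell}$ required by Lemma \ref{ssdegcirc}(b). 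Hence $\ul d$ is a strictly semistable multidegree of total degree $g-1$ with $1\in I^-$, so the hypotheses of Proposition \ref{wdcirc} are satisfied.

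Next I would substitute these explicit values into the formula (\ref{nd}). The key observation is that every factor appearing in the double sum equals $1$: we have $j_r-k_r=2r-(2r-1)=1$ for every $r$, and, using the convention $k_{\ell+1}=\gamma+1=2\ell+1$, we have $k_{s+1}-j_s=1$ for every $s$ (for $s<\ell$ this is $(2s+1)-2s$, and for $s=\ell$ it is $(2\ell+1)-2\ell$). Therefore the double sum $\sum_{1\leq r,s\leq\ell}(j_r-k_r)(k_{s+1}-j_s)$ collapses to $\sum_{1\leq r,s\leq\ell}1=\ell^2$, and so $n(\ul d)=1+\ell^2$.

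There is essentially no hard step here: the statement is a direct specialization of Proposition \ref{wdcirc}. The only points demanding a little care are verifying that the alternating degree pattern produces exactly the interleaving of $I^-$ and $I^+$ demanded by Lemma \ref{ssdegcirc}(b), so that $\ul d$ is genuinely semistable and the proposition applies, and handling the boundary term $s=\ell$ correctly through the convention $k_{\ell+1}=\gamma+1$. Once these are in place, the computation is immediate.
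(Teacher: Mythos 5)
Your proposal is correct and follows exactly the paper's route: the paper's proof is simply ``Follows directly from Proposition \ref{wdcirc},'' and you have spelled out the intended specialization, correctly identifying $k_r=2r-1$, $j_r=2r$, verifying the interleaving hypothesis of Lemma \ref{ssdegcirc}(b) and the convention $k_{\ell+1}=\gamma+1$, so that every factor in the double sum of (\ref{nd}) equals $1$ and $n(\ul d)=1+\ell^2$. No gaps; your write-up is in fact more detailed than the paper's.
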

\begin{proof}
Follows directly fom Proposition \ref{wdcirc}.
\end{proof}

\bigskip
\noindent J. Coelho (coelho@impa.br)
\\ Universidade Federal Fluminense
\\ Rua M\'ario Santos Braga S/N 
\\ Niter\'oi -- Rio de Janeiro -- Brazil
\newline\newline

\noindent E. Esteves (esteves@impa.br)
\\ IMPA
\\ Est. D. Castorina 110 22460-320 
\\ Rio de Janeiro -- Brazil

\end{document}